\documentclass[12pt]{iopart}

\usepackage{graphicx,epsfig,latexsym,enumerate,iopams,setstack}
\usepackage{pstricks,pst-grad,pst-plot}

\newtheorem{theorem}{Theorem}[section]
\newtheorem{remark}[theorem]{Remark}
\newtheorem{assumption}[theorem]{Assumption}
\newtheorem{example}[theorem]{Example}
\newtheorem{definition}[theorem]{Definition}
\newenvironment{proof}[1][Proof]{\begin{trivlist}
\item[\hskip \labelsep {\bfseries #1}]}{\end{trivlist}}
\eqnobysec

\newcommand{\dd}{\mathrm{d}}

\newcommand{\R}{\mathbb{R}}

\newcommand{\half}{\mbox{$\frac{1}{2}$}}

\newcommand{\ignore}[1]{}


\newcommand{\notmid}{\mid\kern-0.5em\not\kern0.5em}


\newcommand{\dst}{\displaystyle}


\newcommand{\ba}{\begin{array}}
\newcommand{\ea}{\end{array}}

\begin{document}
\title{Local Analysis of Inverse Problems: H\"{o}lder Stability
       and Iterative Reconstruction}

\author{Maarten V. de Hoop$^1$, Lingyun Qiu$^1$ and Otmar Scherzer$^2$}

\address{$^1$ Center for Computational and Applied Mathemematics, Purdue University, West Lafayette, IN 47907}
\address{$^2$ Computational Science Center, University of Vienna, Nordbergstra\ss{}e 15, A-1090 Vienna, Austria
           and
           Johann Radon Institute for Computational and Applied Mathematics (RICAM),
           Austrian Academy of Sciences, Altenbergerstra\ss{}e 69,
           A-4040 Linz, Austria}
\ead{\mailto{mdehoop@purdue.edu}, \mailto{qiu@purdue.edu}, \mailto{otmar.scherzer@univie.ac.at}}



\begin{abstract}
We consider a class of inverse problems defined by a nonlinear mapping
from parameter or model functions to the data, where the inverse mapping
is H\"older continuous with respect to appropriate Banach spaces. We
analyze a nonlinear
Landweber iteration and prove local convergence and convergence rates
with respect to an appropriate distance measure. Opposed to the standard
analysis of the nonlinear Landweber iteration, we do not assume source and
non-linearity conditions, but this analysis is based solely on the
H\"older continuity of the inverse mapping.
\end{abstract}

\ams{47J25, 35R30, 65J22}
\submitto{\IP}
\maketitle




\section{Introduction}
\label{sec:1}
In this paper, we study the convergence of certain nonlinear iterative
reconstruction methods for inverse problems in Banach spaces. We consider a class of inverse problems defined by a nonlinear map
from parameter or model functions to the data. The parameter functions
and data are contained in certain Banach spaces, or Hilbert spaces,
respectively.
We explicitly construct sequences of parameter functions by a Landweber
iteration.
Our analysis pertains to obtaining natural conditions for the strong
convergence of these sequences (locally) to the solutions in an
appropriate distance measure.

Our main result establishes convergence of the Landweber iteration if
the inverse problem ensures a H\"{o}lder stability estimate. Moreover,
we prove monotonicity of the residuals defined by the sequence induced
by the iteration. We also obtain the convergence rates without
so-called source and nonlinearity conditions. The stability
condition is a
natural one in the framework of iterative reconstruction.

Extensive research has been carried out to study convergence of the
Landweber iteration \cite{Landweber1951} and its modifications. In the
case of model and data spaces being Hilbert, see Hanke, Neubauer \& Scherzer
\cite{Hanke1995}. An overview of iterative methods for inverse
problems in Hilbert spaces can be found, for example, in Kaltenbacher,
Neubauer \& Scherzer \cite{Kaltenbacher2008}. Sch\"{o}pfer, Louis \&
Schuster \cite{Schopfer2006} presented a nonlinear extension of the
Landweber method to Banach spaces using duality mappings. We use this
iterative method in the analysis presented here.
Duality mappings also play a role in iterative schemes for monotone and
accretive operators
(see Alber \cite{Alber1996}, Chidume \& Zegeye \cite{Chidume2001} and
Zeidler \cite{Zeidler1990,Zeidler1990a}). The model space needs to be
smooth and uniformly convex, however, the data space can be an
arbitrary Banach space. Due to the geometrical characteristics of
Banach spaces other than Hilbert spaces, it is more appropriate to use
Bregman distances rather than Ljapunov functionals to prove
convergence (Osher \textit{et al.}  \cite{Osher2005}). For convergence
rates, see Hofmann \textit{et al.}  \cite{Hofmann2007}. Sch\"{o}pfer,
Louis \& Schuster \cite{Schopfer2008} furthermore considered the
solution of convex split feasibility problems in Banach spaces by
cyclic projections. Under the so-called tangential cone condition,
pertaining to the nonlinear map modelling the data, convergence has
been established; invoking a source condition in a convergence
rate result. Here, we build on the work of Kaltenbacher, Sch\"{o}pfer and
Schuster \cite{Kaltenbacher2009} and revisit these conditions with a
view to stability properties of the inverse problem.

In many inverse problems one probes a medium, or an obstacle, with a
particular type of field and measures the response. From these
measurements one aims to determine the medium properties and/or
(geometrical) structure. Typically, the physical phenomenon is modeled
by partial differential equations and the medium properties by
variable, and possibly singular, coefficients. The interaction of
fields is usually restricted to a bounded domain with
boundary. Experiments can be carried out on the boundary. The goal is
thus to infer information on the coefficients in the interior of the
domain from the associated boundary measurements. The map, solving the
partial differential equations, from coefficients or parameter
functions to the measurements or data is nonlinear. Its injectivity is
studied in the analysis of inverse problems. As an example, we discuss
Electrical Impedance Tomography, where the Dirichlet-to-Neumann map
represents the data, and summarize the conditions leading to Lipschitz
stability.

Traditionally, the Landweber iteration has been viewed as a
fixed-point iteration. However, in general, for inverse problems, the
underlying fixed point
operator is not a contraction. There is an extensive literature of
iterative methods for approximating fixed points of non-expansive
operators. Hanke, Neubauer \& Scherzer \cite{Hanke1995} replace the
condition of non-expansive to a local tangential cone condition, which
guarantees a local result. In the finite-dimensional setting, in
which, for example the model space is $\R^n$, non-convex constraint
optimization problems admitting iterative solutions have been studied
by Curtis \textit{et al.} \cite{Curtis2010}. Under certain
assumptions, they obtain convergence to stationary points of the
associated feasibility problem. In the context of inverse problems
defined by partial differential equations, this setting is motivated
by discretizing the problems prior to studying the convergence
(locally) of the iterations. Inequality constraints are necessary to
enforce locality. The non-convexity is addressed by Hessian
modifications based on inertia tests.

The paper is organized as follows. In the next section, we summarize
certain geometrical aspects of Banach spaces, including (uniform)
smoothness and (uniform) convexity, and their connection to duality
mappings. Smoothness is naturally related to G\^{a}teaux
differentiability. We also introduce the Bregman distance. We then
define the nonlinear Landweber iteration in Banach spaces. In
Section~\ref{sec:3} we introduce the basic assumptions including
H\"{o}lder stability and analyze the convergence of the Landweber
iteration in Hilbert spaces. In Section~\ref{sec:4} we adapt these
assumptions and generalize the analysis of convergence of the
Landweber iteration to Banach spaces. We also establish the
convergence rates. In Section~\ref{sec:6} we give an example, namely,
the reconstruction of conductivity in Electrical Impedance Tomography,
and show that our assumptions can be satisfied.

\section{Landweber iteration in Banach spaces}
\label{sec:2}

Let $X$ and $Y$ be both real Banach spaces. We consider the nonlinear
operator equation
\begin{equation}\label{forward-equ}
   F(x) = y , \quad x \in \mathcal{D}(F) ,\ y \in Y ,
\end{equation}
with domain $\mathcal{D}(F) \subset X$. In applications, $F
:\ \mathcal{D}(F) \rightarrow Y$ models the data. In the inverse
problem one is concerned with the question whether $y$ determines $x$.
We assume that $F$ is continuous, and that $F$ is Fr\'{e}chet
differentiable, locally.

We couple the uniqueness and stability analysis of the inverse problem
to a local solution construction based on the Landweber
iteration. Throughout this paper, we assume that the data $y$ in
(\ref{forward-equ}) is attainable, that is, that (\ref{forward-equ})
has a solution $x^\dag$ (which needs not to be unique).

\subsection{Duality mappings}
\label{sec:2-1}

The duals of $X$ and $Y$ are denoted by $X^*$ and $Y^*$,
respectively. Their norms are denoted by $\|\cdot\|$. We denote the
space of continuous linear operators $X \rightarrow Y$ by
$\mathcal{L}(X,Y)$. Let $A : \mathcal{D}(A) \subset X \rightarrow Y $
be continuous. Here $\mathcal{D}(A)$ denotes the domain of $A$. Let $h
\in \mathcal{D}(A)$ and $k \in X$ and assume that $h + t(k-h) \in
\mathcal{D}(A)$ for all $t\in (0, t_0)$ for some $t_0$, then we denote
by $DA(h)(k)$ the directional derivative of $A$ at $h\in
\mathcal{D}(A)$ in direction $k\in \mathcal{D}(A)$, that is,
\[
DA(h)(k):= \lim_{t \rightarrow 0^+} \frac{A(h+tk) - A(h)}{t}.
\]
If $DA(h) \in \mathcal{L}(X,Y)$, then $DA(h)$ is called G\^{a}teaux differentiable. If, in addition, the convergence is uniform for all $k\in B_{t_0}$, then $DA$ is Fr\'{e}chet differentiable at $h$. For $x \in X$ and $x^* \in X^*$, we write the dual
pair as $\langle x,x^* \rangle = x^*(x)$. We
write $A^*$ for the dual operator $A^* \in \mathcal{L}(Y^*,X^*)$ and
$\|A\| = \|A^*\|$ for the operator norm of $A$. We let $1 < p,q <
\infty$ be conjugate exponents, that is,
\begin{equation}
   \frac{1}{p} + \frac{1}{q} = 1 .
\end{equation}

For $p > 1$, the subdifferential mapping $J_p = \partial f_p : X
\rightarrow 2^{X^*}$ of the convex functional $f_p: x\mapsto
\frac{1}{p}\|x\|^p$ defined by
\begin{equation}\label{Duality-map-def}
   J_p(x) = \{ x^* \in X^* \mid \langle x, x^* \rangle
           = \|x\| \,\|x^*\| \mbox{ and } \|x^*\| = \|x\|^{p-1}\}
\end{equation}
is called the duality mapping of $X$ with gauge function $t \mapsto
t^{p-1}$. Generally, the duality mapping is set-valued. In order to
let $J_p$ be single valued, we need to introduce the notion of
convexity and smoothness of Banach spaces.

One defines the convexity modulus $\delta_X$ of $X$ by
\begin{equation}\label{def:convex-module}
   \delta_X(\epsilon) = \inf_{ x, \tilde{x} \in X} \{
      1 - \| \half (x + \tilde{x})\| \mid \|x\| = \|\tilde{x}\| = 1
            \mbox{ and }\|x - \tilde{x}\| \ge \epsilon\}
\end{equation}
and the smoothness modulus $\rho_X$ of $X$ by
\begin{equation}\label{def:smooth-module}
   \rho_X(\tau) = \sup_{x, \tilde{x} \in X} \{
      \half (\|x + \tau \tilde{x}\| + \|x - \tau \tilde{x}\|
           - 2) \mid \|x\| = \|\tilde{x}\| = 1 \} .
\end{equation}

\medskip\medskip

\begin{definition}
A Banach space $X$ is said to be
\begin{enumerate}[(a)]
\item uniformly convex if $\delta_X(\epsilon) > 0$ for any $\epsilon
  \in (0,2],$
\item uniformly smooth if $\lim_{\tau \rightarrow 0}
  \frac{\rho_X(\tau)}{\tau} =0$,
\item convex of power type $p$ or $p$-convex if there exists a
  constant $C > 0$ such that $\delta_X(\epsilon) \ge C \epsilon^p$,
\item smooth of power type $q$ or $q$-smooth if there exists a
  constant $C > 0$ such that $\rho_X(\tau) \le C \tau^q$.
\end{enumerate}
\end{definition}

\medskip\medskip

\begin{example}
\begin{enumerate}[(a)]
\item A Hilbert space $X$ is $2$-convex and $2$-smooth and $J_2: X
  \rightarrow X$ is the identity mapping.
\item Let $\Omega\subset \mathbb{R}^n$ be an open domain. The Banach
  space $L^p = L^p(\Omega)$, $p > 1$ is uniformly convex and
  uniformly smooth, and
\begin{equation*}
    \delta_{L^p}(\epsilon) \simeq \left\{
    \begin{array}{rl}
    \epsilon^2 , & \quad  1 < p < 2,\\
    \epsilon^p , & \quad  2 \le p < \infty;\\
    \end{array}
    \right.
\end{equation*}
\begin{equation*}
    \rho_{L^p}(\tau) \simeq \left\{
    \begin{array}{rl}
    \tau^p , & \quad  1 < p < 2,\\
    \tau^2 , & \quad  2 \le p < \infty.\\
    \end{array}
    \right.
\end{equation*}
\item For $X = L^r(\mathbb{R}^n)$, $r>1$, we have
\begin{equation*}
\begin{array}{rl}
   J_p : L^r(\mathbb{R}^n) & \rightarrow L^s(\mathbb{R}^n)
\\[0.2cm]
         u(x) &\mapsto \|u\|_{L^r}^{p-r} |u(x)|^{r-2} u(x) ,
\end{array}
\end{equation*}
where $\frac{1}{r} + \frac{1}{s} = 1$.
\end{enumerate}
\end{example}

\medskip\medskip

For a detailed introduction to the geometry of Banach spaces and the
duality mapping, we refer to \cite{Cioranescu1990,Schopfer2006}. We
list the properties we need here in the following theorem.

\medskip\medskip

\begin{theorem} Let $p>1$. The following statements hold true:
\begin{enumerate}[(a)]
\item For every $x \in X$, the set $J_p(x)$ is not empty and it is
  convex and weakly closed in $X^*$.
\item If a Banach space is uniformly convex, it is reflexive.
\item A Banach space $X$ is uniformly convex (resp. uniformly smooth)
  iff $X^*$ is uniformly smooth (resp. uniformly convex).
\item If a Banach space $X$ is uniformly smooth, $J_p(x)$ is single
  valued for all $x \in X$.
\item If a Banach space $X$ is uniformly smooth and uniformly convex,
  $J_p(x)$ is bijective and its inverse $J_p^{-1}: X^* \rightarrow X$
  is given by $J_p^{-1} = J_q^*$ with $J_q^*$ being the duality
  mapping of $X^*$ with gauge function $t \mapsto t^{q-1}$, where $1 <
  p,q < \infty$ are conjugate exponents.
\end{enumerate}
\end{theorem}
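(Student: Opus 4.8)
The plan is to dispatch the five statements in turn; (a), (d), (e) are essentially bookkeeping with the definitions, while (b) and (c) are the two substantive classical facts and the place where any real work sits.

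For (a) I would first get non-emptiness from Hahn--Banach: for $x \neq 0$ choose a norming functional $\tilde x^* \in X^*$ with $\|\tilde x^*\| = 1$ and $\langle x, \tilde x^*\rangle = \|x\|$, and then $\|x\|^{p-1}\tilde x^* \in J_p(x)$; and $J_p(0) = \{0\}$. For convexity and (weak) closedness I would rewrite the defining set: on $J_p(x)$ the two conditions force $\langle x, x^*\rangle = \|x\|^p$, and conversely any $x^*$ with $\|x^*\| \le \|x\|^{p-1}$ and $\langle x, x^*\rangle = \|x\|^p$ automatically satisfies $\|x^*\| = \|x\|^{p-1}$ and $\langle x, x^*\rangle = \|x\|\,\|x^*\|$. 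Hence $J_p(x)$ is the intersection of the closed ball of radius $\|x\|^{p-1}$ in $X^*$ with the closed affine hyperplane $\{x^* : \langle x, x^*\rangle = \|x\|^p\}$; this is norm-closed and convex, hence weakly closed by Mazur's theorem. (Alternatively, one simply notes $J_p = \partial f_p$ is a subdifferential.)

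For (b), the Milman--Pettis theorem, I would argue via the canonical embedding $\iota\colon X \to X^{**}$: given $\xi \in X^{**}$ with $\|\xi\| = 1$, use Goldstine's theorem to produce elements of the unit ball of $X$ whose images approximate $\xi$ in the weak-$*$ sense; uniform convexity then forces these elements to form a Cauchy net, whose limit $x$ satisfies $\iota(x) = \xi$, so $\iota$ is onto. For (c) I would derive the Lindenstrauss duality formula
\[
  \rho_{X^*}(\tau) = \sup\{\, \half\tau\epsilon - \delta_X(\epsilon) : 0 \le \epsilon \le 2 \,\} ,
\]
together with the companion identity obtained by interchanging the roles of $X$ and $X^*$, directly from the definitions (\ref{def:convex-module})--(\ref{def:smooth-module}) by dualising the suprema and infima over the unit spheres. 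From the first identity one reads off that $\delta_X(\epsilon) > 0$ for all $\epsilon \in (0,2]$ is equivalent to $\rho_{X^*}(\tau)/\tau \to 0$ as $\tau \to 0^+$, i.e.\ $X$ uniformly convex $\iff$ $X^*$ uniformly smooth; the companion formula gives $X$ uniformly smooth $\iff$ $X^*$ uniformly convex. For (d) I would use that uniform smoothness of $X$ is equivalent to the norm being uniformly Fr\'echet differentiable on the unit sphere (precisely the statement that $\rho_X(\tau)/\tau \to 0$ controls the symmetric difference quotient of $\|\cdot\|$ uniformly); in particular $x \mapsto \|x\|$, hence $f_p(x) = \frac1p\|x\|^p$, is G\^ateaux differentiable on $X\setminus\{0\}$, so $\partial f_p(x) = J_p(x)$ collapses to the single point $\{f_p'(x)\}$, with $J_p(0) = \{0\}$.

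Finally, for (e): by (b), $X$ is reflexive, and by (c), $X^*$ is again uniformly smooth and uniformly convex, so $J_q^*\colon X^* \to X^{**} = X$ is single-valued by (d). The key algebraic point is that, since $p,q$ are conjugate, $(p-1)(q-1) = 1$, so $\|x\| = \|x^*\|^{q-1}$ whenever $\|x^*\| = \|x\|^{p-1}$; together with the symmetry of the pairing condition this gives $x^* \in J_p(x) \iff x \in J_q^*(x^*)$. Composing yields $J_q^* \circ J_p = \Id_X$ and $J_p \circ J_q^* = \Id_{X^*}$: injectivity of $J_p$ holds because $J_p(x) = J_p(\tilde x) = x^*$ places $x,\tilde x$ in the singleton $J_q^*(x^*)$, and surjectivity because for $x^* \in X^*$ the element $x := J_q^*(x^*)$ satisfies $x^* \in J_p(x)$, hence $J_p(x) = x^*$ by single-valuedness. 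The only genuine obstacles here are (b) and the derivation of the Lindenstrauss formula in (c); everything else is formal once those are in place, and in the paper one may instead simply cite \cite{Cioranescu1990,Schopfer2006} for the whole list.
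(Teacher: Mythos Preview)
Your sketches are sound, but you should know that the paper does not actually prove this theorem: it is stated as a list of known facts, with the sentence ``For a detailed introduction to the geometry of Banach spaces and the duality mapping, we refer to \cite{Cioranescu1990,Schopfer2006}'' immediately preceding it. So the paper's ``proof'' is a citation, and you already anticipated this in your final sentence.

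What you have written goes well beyond the paper: your arguments for (a) (Hahn--Banach plus the ball-hyperplane description of $J_p(x)$), (b) (Milman--Pettis via Goldstine and a Cauchy-net argument), (c) (the Lindenstrauss duality formula linking $\delta_X$ and $\rho_{X^*}$), (d) (uniform smoothness $\Rightarrow$ uniform Fr\'echet differentiability of the norm $\Rightarrow$ $\partial f_p$ is a singleton), and (e) (the conjugate-exponent identity $(p-1)(q-1)=1$ giving the symmetry $x^*\in J_p(x)\iff x\in J_q^*(x^*)$) are all standard and correct. If you were writing this as a self-contained exposition your outline would be the right one; for the purposes of this paper, though, the citation is what is expected, and your proposal correctly identifies that option.
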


\medskip\medskip

Throughout this paper, we assume that $X$ is $p$-convex and $q$-smooth
with $p,q > 1$, hence it is uniformly smooth and uniformly
convex. Furthermore, $X$ is reflexive and its dual $X^*$ has the same
properties. $Y$ is allowed to be an arbitrary Banach space; $j_p$ will
be a single-valued selection of the possibly set-valued duality
mapping of $Y$ with gauge function $t \mapsto t^{p-1}$, $p >
1$. Further restrictions on $X$ and $Y$ will be indicated in the
respective theorems below.

\subsection{Bregman distances}
\label{sec:2-3}

Due to the geometrical characteristics of Banach spaces different from
those of Hilbert spaces, it is often more appropriate to use the
Bregman distance instead of the conventional norm-based functionals
$\|x-\tilde{x}\|^p$ or $\|J_p(x) - J_p(\tilde{x})\|^p$ for convergence
analysis. This idea goes back to Bregman \cite{Bregman1967}.

\medskip\medskip

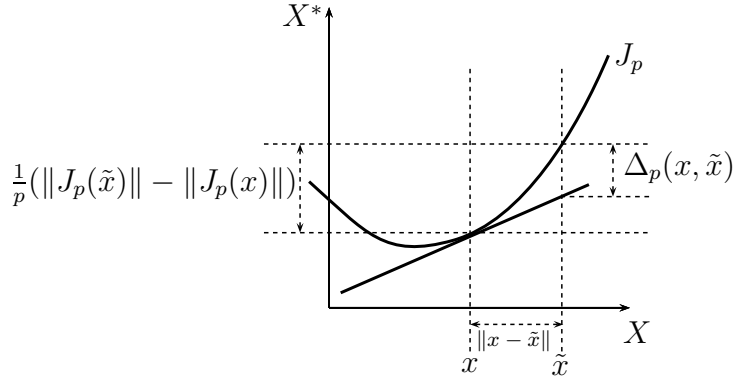
\begin{figure}[htp]
\centering
\scalebox{1} 
{
\begin{pspicture}(0,-2.4489062)(9.682813,2.4089062)
\rput(3.446771,-1.5910938){\psaxes[linewidth=0.024,tickstyle=bottom,labels=none,ticks=none,ticksize=0.10583333cm]{->}(0,0)(0,0)(4,4)}
\usefont{T1}{ptm}{m}{n}
\rput(7.5423436,-1.9210937){$X$}
\psbezier[linewidth=0.04](3.1809375,0.08890625)(3.9409375,-0.5910938)(4.1809373,-0.95109373)(5.1009374,-0.69109374)(6.0209374,-0.43109375)(6.7409377,0.76890624)(7.1609373,1.7689062)
\psline[linewidth=0.04cm](3.6009376,-1.3910937)(6.9009376,0.04890625)
\psline[linewidth=0.02cm,linestyle=dashed,dash=0.06cm 0.06cm](6.5409374,1.5889063)(6.5409374,-2.1710937)
\psline[linewidth=0.02cm,linestyle=dashed,dash=0.06cm 0.06cm](5.3209376,1.5889063)(5.3209376,-2.1710937)
\usefont{T1}{ptm}{m}{n}
\rput(7.432344,1.7189063){$J_p$}
\usefont{T1}{ptm}{m}{n}
\rput(6.52,-2.35){$\tilde{x}$}
\usefont{T1}{ptm}{m}{n}
\rput(5.32,-2.35){$x$}
\psline[linewidth=0.02cm,linestyle=dashed,dash=0.06cm 0.06cm](2.5809374,0.5889062)(7.6809373,0.5889062)
\psline[linewidth=0.02cm,linestyle=dashed,dash=0.06cm 0.06cm](6.5209374,-0.11109375)(7.7009373,-0.11109375)
\psline[linewidth=0.02cm,linestyle=dashed,dash=0.06cm 0.06cm,arrowsize=0.05291667cm 2.0,arrowlength=1.4,arrowinset=0.4]{<->}(7.2209377,0.5889062)(7.2209377,-0.11109375)
\usefont{T1}{ptm}{m}{n}
\rput(8.1,0.27890626){$\Delta_p(x, \tilde{x})$}
\psline[linewidth=0.02cm,linestyle=dashed,dash=0.06cm 0.06cm,arrowsize=0.05291667cm 2.0,arrowlength=1.4,arrowinset=0.4]{<->}(5.3209376,-1.8110938)(6.5609374,-1.8110938)
\usefont{T1}{ptm}{m}{n}
\rput(5.9,-2.025){\fontsize{8}{8}\selectfont $\|x - \tilde{x}\|$}
\psline[linewidth=0.02cm,linestyle=dashed,dash=0.06cm 0.06cm](2.5809374,-0.5910938)(7.6809373,-0.5910938)
\psline[linewidth=0.02cm,linestyle=dashed,dash=0.06cm 0.06cm,arrowsize=0.05291667cm 2.0,arrowlength=1.4,arrowinset=0.4]{<->}(3.0609374,0.5889062)(3.0609374,-0.5910938)
\usefont{T1}{ptm}{m}{n}
\rput(1.12,0.04){$\frac{1}{p} (\|J_p(\tilde{x})\| - \|J_p(x)\|)$}
\rput(3.1,2.3){$X^*$}
\end{pspicture}
}
\caption{Bregman distance, $\Delta_p$.}
\label{fig:1}
\end{figure}

%
%

\begin{definition}
Let $X$ be a uniformly smooth Banach space and $p > 1$. The Bregman
distance $\Delta_p (x, \cdot)$ of the convex functional $x\mapsto
\frac{1}{p}\|x\|^p$ at $x \in X$ is defined as
\begin{equation}\label{def:Bregman-distance}
   \Delta_p(x,\tilde{x}) =
       \frac{1}{p} \|\tilde{x}\|^p - \frac{1}{p} \|x\|^p
    - \langle J_p(x), \tilde{x} - x\rangle , \quad \tilde{x} \in X ,
\end{equation}
where $J_p$ denotes the duality mapping of $X$ with gauge function
$t \mapsto t^{p-1}$.
\end{definition}

\medskip\medskip

In the following theorem, we summarize some facts concerning the
Bregman distance and the relationship between Bregman distance and the
norm \cite{Alber1997, Alber1996,Butnariu2000,Xu1991}.

\medskip\medskip

\begin{theorem}\label{thm:Bregman-distance}
Let $X$ be a uniformly smooth and uniformly convex Banach space. Then,
for all $x, \tilde{x} \in X$, the following holds:
\begin{enumerate}[(a)]
\item
\begin{eqnarray}\label{eq:BDpq}
   \Delta_p(x,\tilde{x}) &=& \frac{1}{p}\|\tilde{x}\|^p
   - \frac{1}{p}\|x\|^p - \langle J_p(x), \tilde{x} \rangle + \|x\|^p
\\
   &=& \frac{1}{p}\|\tilde{x}\|^p + \frac{1}{q}\|x\|^p
       - \langle J_p(x), \tilde{x}\rangle .
\nonumber
\end{eqnarray}
\item $\Delta_p(x,\tilde{x}) \ge 0$ and $\Delta_p(x,\tilde{x}) = 0
  \Leftrightarrow x = \tilde{x}.$
\item $\Delta_p$ is continuous in both arguments.
\item The following statements are equivalent
\begin{enumerate}[(i)]
\item $\lim_{n \rightarrow \infty} \|x_n - x\| = 0, $
\item $\lim_{n \rightarrow \infty} \Delta_p(x_n , x) = 0 ,$
\item $\lim_{n \rightarrow \infty} \|x_n\| = \|x\|$ and $ \lim_{n
  \rightarrow \infty} \langle J_p(x_n), x\rangle = \langle J_p(x),
  x\rangle$.
\end{enumerate}
\item If $X$ is $p$-convex, there exists a constant $C_p > 0$ such
  that
\begin{equation}\label{Bregman-norm-rela1}
   \Delta_p(x,\tilde{x}) \ge \frac{C_p}{p} \|x-\tilde{x}\|^p .
\end{equation}
\item If $X^*$ is $q$-smooth, there exists a constant $G_q > 0$ such
  that
\begin{equation}\label{Bregman-norm-rela2}
   \Delta_q(x^*,\tilde{x}^*) \le \frac{G_q}{q} \|x^*-\tilde{x}^*\|^q ,
\end{equation}
for all $x^*, \tilde{x}^*\in X^*.$
\end{enumerate}
\end{theorem}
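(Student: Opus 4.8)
The plan is to reduce every item of the theorem to the algebraic identity in part~(a) together with the classical power-type geometric inequalities for uniformly convex and uniformly smooth spaces. I would first prove (a) by expanding the defining formula (\ref{def:Bregman-distance}). Since $X$ is uniformly smooth the duality map $J_p$ is single-valued and, by (\ref{Duality-map-def}), $\|J_p(x)\| = \|x\|^{p-1}$ and $\langle J_p(x), x\rangle = \|x\|\,\|J_p(x)\| = \|x\|^p$; hence $\langle J_p(x), \tilde{x} - x\rangle = \langle J_p(x), \tilde{x}\rangle - \|x\|^p$, and substituting this and combining the $\|x\|^p$ terms with $1 - \frac{1}{p} = \frac{1}{q}$ produces the first and then the second line of (\ref{eq:BDpq}).

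For (b), I would observe that under uniform smoothness the convex functional $f_p := \frac{1}{p}\|\cdot\|^p$ is G\^{a}teaux differentiable with derivative $J_p$ (part~(d) of the preceding theorem), so that $\Delta_p(x,\tilde{x}) = f_p(\tilde{x}) - f_p(x) - \langle J_p(x), \tilde{x} - x\rangle$ is precisely the gap between $f_p$ and its supporting tangent at $x$, hence $\ge 0$; the same sign can be read directly off the second line of (\ref{eq:BDpq}) by Young's inequality $\langle J_p(x),\tilde{x}\rangle \le \|x\|^{p-1}\|\tilde{x}\| \le \frac{1}{q}\|x\|^p + \frac{1}{p}\|\tilde{x}\|^p$, using $(p-1)q = p$. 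For the equality case I would use that uniform convexity forces the unit ball to be strictly convex, hence $f_p$ is strictly convex for $p>1$; a convex function that agrees with one of its tangents at a second point is affine on the segment joining the two points, which is impossible unless they coincide, so $\Delta_p(x,\tilde{x}) = 0 \Leftrightarrow x = \tilde{x}$. Part (c) is then immediate from the second line of (\ref{eq:BDpq}): $x\mapsto\|x\|^p$ is continuous, and in a uniformly smooth space $J_p$ is norm-to-norm continuous (\cite{Cioranescu1990}), so $(x,\tilde{x})\mapsto\langle J_p(x),\tilde{x}\rangle$ is jointly continuous via $|\langle J_p(x_n),\tilde{x}_n\rangle - \langle J_p(x),\tilde{x}\rangle| \le \|J_p(x_n) - J_p(x)\|\,\|\tilde{x}_n\| + \|x\|^{p-1}\|\tilde{x}_n - \tilde{x}\|$.

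For (d), the implication (i)$\Rightarrow$(ii) follows from (c). For (ii)$\Rightarrow$(i), under the standing assumption that $X$ is $p$-convex, estimate (e) gives $\|x_n - x\|^p \le \frac{p}{C_p}\Delta_p(x_n,x)\to 0$; in the general uniformly convex case this is the total convexity of $f_p$ on bounded sets, for which I would cite \cite{Butnariu2000, Alber1996}. Finally (ii)$\Leftrightarrow$(iii) closes the loop: (iii)$\Rightarrow$(ii) is the direct computation $\Delta_p(x_n,x) = \frac{1}{p}\|x\|^p + \frac{1}{q}\|x_n\|^p - \langle J_p(x_n),x\rangle \to \frac{1}{p}\|x\|^p + \frac{1}{q}\|x\|^p - \|x\|^p = 0$ from the second line of (\ref{eq:BDpq}), while (ii)$\Rightarrow$(iii) follows from (ii)$\Rightarrow$(i) combined with the norm-to-norm continuity of $J_p$, which yields both $\|x_n\|\to\|x\|$ and $\langle J_p(x_n),x\rangle\to\langle J_p(x),x\rangle$.

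The quantitative content is in (e) and (f). For (e) I would invoke the Xu--Roach power-type inequality holding in a $p$-convex space, $\|\tilde{x}\|^p \ge \|x\|^p + p\langle J_p(x),\tilde{x}-x\rangle + C_p\|\tilde{x}-x\|^p$; dividing by $p$ and inserting into (\ref{def:Bregman-distance}) yields (\ref{Bregman-norm-rela1}). For (f) I would use the companion inequality in the $q$-smooth space $X^*$, $\|\tilde{x}^*\|^q \le \|x^*\|^q + q\langle J_q(x^*),\tilde{x}^*-x^*\rangle + G_q\|\tilde{x}^*-x^*\|^q$; dividing by $q$ and rearranging against the definition of $\Delta_q(x^*,\tilde{x}^*)$ gives (\ref{Bregman-norm-rela2}). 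I expect (a)--(d) to be routine; the genuine obstacle is (e)--(f), which I would not reprove but quote from \cite{Xu1991}, since the hard step there is to pass from the infimum/supremum definitions (\ref{def:convex-module}) and (\ref{def:smooth-module}) of $\delta_X$ and $\rho_X$ to these pointwise power estimates for $\|\cdot\|^p$---a delicate convexity argument along line segments.
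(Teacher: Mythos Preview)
Your proposal is correct and, in fact, supplies considerably more than the paper does: the paper gives no proof of this theorem at all, but merely introduces it as a summary of known facts with citations to \cite{Alber1997, Alber1996, Butnariu2000, Xu1991}. Your argument tracks exactly those sources---the algebraic identity (a) from the duality relation $\langle J_p(x),x\rangle = \|x\|^p$, the sign and equality case in (b) via Young and strict convexity, continuity in (c) via norm-to-norm continuity of $J_p$ on uniformly smooth spaces, the equivalence chain in (d) via (c), (e), and total convexity, and the Xu--Roach power inequalities for (e) and (f)---so there is no divergence of approach to report. One small point worth keeping explicit in the write-up: for (d), (ii)$\Rightarrow$(i), the theorem's hypotheses are only uniform convexity and uniform smoothness, so the $p$-convex shortcut through (e) is not available in general; you correctly fall back on total convexity from \cite{Butnariu2000, Alber1996}, but make sure the final text does not silently assume $p$-convexity there.
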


\medskip\medskip

\begin{remark}
The Bregman distance $\Delta_p$ is similar to a metric, but, in
general, does not satisfy the triangle inequality nor symmetry. In a
Hilbert space, $\Delta_2(x,\tilde{x}) = \frac{1}{2}
\|x-\tilde{x}\|^2.$
\end{remark}

\subsection{Landweber iteration}
\label{sec:2-2}

In this subsection, we introduce an iterative method for minimizing the functional \begin{equation}
\Phi(x) = \frac{1}{p} \|F(x) - y\|^p.
\end{equation}
The iterates $\{x_k\}$ are generated with  the steepest descent flow given by
\begin{equation}
\partial\Phi^{(k)}(x_k) = DF(x_k)^*j_p(F(x_k) - y)  .
\end{equation}
To be more precisely, we study the iterative method in Banach spaces,
\begin{equation}\label{Landweber-iter-Banach}
\begin{array}{rl}
   J_p(x_{k+1}) = & J_p(x_{k}) - \mu DF(x_k)^* j_p(F(x_k) - y)
           ,
\\[0.2cm]
   x_{k+1} = & J_q^*(J_p(x_{k+1})) ,
\end{array}
\end{equation}
where $J_p: X \rightarrow X^*$, $J_q^*: X^* \rightarrow X$ and $j_p:
Y\rightarrow Y^*$ denote duality mappings in corresponding spaces. When $X$ and $Y$ are Hilbert spaces and $p = 2$, this reduces to the Landweber iteration in Hilbert spaces
\begin{equation}\label{Landweber-iter-Hilbert}
   x_{k+1} = x_{k} - \mu DF(x_k)^* (F(x_k) - y).
\end{equation}
If $F$ is a linear operator, the iteration (1.3) coincides with
Landweber's original algorithm. We
specify $\mu$ below. Equation
(\ref{Landweber-iter-Banach}) defines a sequence $(x_k)$.

If $F(x^\dagger) = y$, the so-called tangential cone condition
\cite{Kaltenbacher2009},
\begin{equation}
   \| F(x) - F(\tilde{x}) - DF(x) (x - \tilde{x}) \|
   \le c_{\mathrm{TC}}\
            \| F(x) - F(\tilde{x}) \| \quad
   \forall x,\tilde{x} \in \mathcal{B}^{\Delta}_{\rho}(x^{\dagger}) ,
\end{equation}
for some $0 < c_{\mathrm{TC}} < 1$, is crucial to obtain convergence
of $(x_k)$ to $x^\dagger$
\cite{Hein2010,Hofmann2007,Kaltenbacher2009};
$\mathcal{B}^{\Delta}_{\rho}(x^{\dagger}) = \{ x \in
X\ |\ \Delta_p(x,x^{\dagger}) \le \rho \} \subset \mathcal{D}(F)$. A
source condition controls the convergence rate. Here, we study
convergence and convergence rates in relation to a single, alternative
condition replacing the tangential cone and source conditions, namely,
H\"{o}lder type stability,
\begin{equation*}
   \Delta_p(x,\tilde{x}) \le
   C_F^p \|F(x) - F(\tilde{x})\|^{\frac{1 + \varepsilon}{2}p} \quad
   \forall x,\tilde{x} \in \mathcal{B}^{\Delta}_{\rho}(x^{\dagger}) ,
\end{equation*}
for some $\varepsilon \in (0,1]$. With the Fr\'echet differentiability of $F$ and the Lipschitz continuity of $DF$, this condition implies the
  tangential cone condition, and, hence, convergence is guaranteed;
  however, it also implies a certain convergence rate.

\section{Convergence rate and radius of convergence -- Hilbert spaces}
\label{sec:3}

In this section, we assume that $X$ and $Y$ are Hilbert spaces. Then
the mappings $J_p, j_p$ and $J^*_q$ are identity mappings. Let
$\mathcal{B}_{\rho}(x_0)$ denote a closed ball centered at $x_0$ with
radius $\rho$, such that $\mathcal{B} = \mathcal{B}_{\rho'}(x_0)
\subset \mathcal{D}(F)$, $\rho' > \rho$. As before, let $x^\dag$
generate the data $y$, that is
\begin{equation}\label{operator-equ}
   F(x^{\dag}) = y .
\end{equation}
We assume that $x^{\dag} \in \mathcal{B}_{\rho}(x_0)$.

\medskip\medskip

\begin{assumption}
\label{assumption:forward-operator-Hilbert-Holder}
\begin{enumerate}[(a)]
\item The Fr\'{e}chet derivative, $DF$, of $F$ is Lipschitz continuous
  locally in $\mathcal{B}$ and
  \begin{equation}
   \| DF(x) - DF(\tilde{x}) \| \le L \| x - \tilde{x} \|
   \quad \forall x, \tilde{x} \in \mathcal{B} .
\end{equation}
\item $F$ is weakly sequentially closed, that is,
\begin{equation*}
\left. \begin{array}{rl}
      & x_n \rightharpoonup x ,
 \\
      & F(x_n) \rightarrow y
      \end{array} \right\} \Rightarrow
      \left\{ \begin{array}{rl}
      & x \in \mathcal{D}(F) ,
\\
      & F(x) = y .
\end{array}\right.
\end{equation*}
\item The inversion has the uniform H\"{o}lder type stability, that is,
  there exists a constant, $C_F > 0$, such that
\begin{equation}\label{stab-Hilbert-Holder}
   \frac{1}{\sqrt{2}}\|x - \tilde{x}\|
   \le C_F \|F(x) - F(\tilde{x})\|^{\frac{1+\varepsilon}{2}} \quad
           \forall x, \tilde{x} \in \mathcal{B}
\end{equation} for some $\varepsilon\in (0,1]$
\end{enumerate}
\end{assumption}

\medskip \medskip

In the remainder of this section, we discuss the convergence criterion
and convergence rate for the Landweber iteration
(\ref{Landweber-iter-Hilbert}).

\medskip\medskip
\medskip\medskip

\begin{theorem}
Assume there exists a solution $x^\dag$ to (\ref{operator-equ}) and
that Assumption~\ref{assumption:forward-operator-Hilbert-Holder}
holds. Furthermore, assume that
\begin{equation}\label{eq:DFb-h}
   \| DF(x) \| \le \hat{L} \quad \forall x \in \mathcal{B}.
\end{equation}
Let the positive stepsize $\mu$ be such that
\begin{equation}\label{eq:mucond-h}
\begin{array}{rl}
   \mu < & \dst{\frac{1}{\hat{L}^2}} ,
\\
   \dst{\mu (1 -
   \mu \hat{L}^2)} < & 2^{\frac{2}{1+\varepsilon}} C_F^{\frac{4}{1+\varepsilon}}
\end{array}
\end{equation}
Let
\begin{equation*}
\rho = \frac{1}{2}( 2L \hat{L}^\varepsilon C_F^2 )^{-2/\varepsilon}.
\end{equation*}
If
\begin{equation}\label{converge-radius}
  \frac{1}{2}\|x_0 - x^\dag\|^2 \le \rho ,
\end{equation}
then the iterates satisfy
\begin{equation}
   \frac{1}{2}\|x_k - x^\dag\|^2 \le \rho, \quad
   k =1,2,\ldots
\end{equation}
and $x_k \rightarrow x^\dag \mbox{ as } k \rightarrow \infty.$
Moreover, let
\begin{equation}
   c = \frac{1}{2} \mu (1 - \mu \hat{L}^2)
                 C_F^{-\frac{4}{1+\varepsilon}} ;
\end{equation}
from (\ref{eq:mucond-h}), it follows that $0 < c < 1$.  The
convergence rate is given by
\begin{equation}\label{convergence-rate}
   \frac{1}{2}\| x_k - x^\dag \|^2 \le
    \rho (1-c)^{k},
\end{equation}
if $\varepsilon = 1$. For $\varepsilon \in (0,1)$, the convergence
rate is given by
\begin{equation}\label{convergence-rate-h}
   \frac{1}{2}\| x_k - x^\dag \|^2 \le
   \left( ck \frac{1 - \varepsilon}{1+\varepsilon}
   + \rho^{-\frac{1 - \varepsilon}{1+\varepsilon}}
   \right)^{-\frac{1 + \varepsilon}{1 - \varepsilon}} ,\quad
   k = 0,1,\dots .
\end{equation}
\end{theorem}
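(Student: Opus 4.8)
Write $e_k = x_k - x^\dag$ and $r_k = F(x_k) - y = F(x_k) - F(x^\dag)$. The plan is to run the classical Landweber argument: expand the squared error, control the linearization error by Assumption~\ref{assumption:forward-operator-Hilbert-Holder}(a), and let the H\"older stability (\ref{stab-Hilbert-Holder}) play the role that the tangential cone and source conditions play in the standard treatment. From (\ref{Landweber-iter-Hilbert}), using the adjoint identity $\langle e_k, DF(x_k)^* r_k\rangle = \langle DF(x_k)e_k, r_k\rangle$,
\begin{equation*}
   \|e_{k+1}\|^2 - \|e_k\|^2 = -2\mu\langle DF(x_k)e_k, r_k\rangle + \mu^2\|DF(x_k)^* r_k\|^2 ,
\end{equation*}
and the second term is $\le \mu^2\hat L^2\|r_k\|^2$ by (\ref{eq:DFb-h}) and $\|A^*\|=\|A\|$. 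For the cross term I would use the integral mean value theorem together with the Lipschitz bound to get $\|r_k - DF(x_k)e_k\| \le \frac{L}{2}\|e_k\|^2$, hence $\langle DF(x_k)e_k, r_k\rangle \ge \|r_k\|^2 - \frac{L}{2}\|e_k\|^2\|r_k\|$; inserting this and then the stability estimate in the form $\|e_k\|^2 \le 2C_F^2\|r_k\|^{1+\varepsilon}$ yields
\begin{equation*}
   \|e_{k+1}\|^2 - \|e_k\|^2 \le -\mu\|r_k\|^2\bigl[(2-\mu\hat L^2) - 2LC_F^2\|r_k\|^{\varepsilon}\bigr] .
\end{equation*}

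Next I would show the bracket stays positive while $\frac{1}{2}\|e_k\|^2 \le \rho$. Since $\|r_k\| \le \hat L\|e_k\|$ by (\ref{eq:DFb-h}) and $\frac{1}{2}\|e_k\|^2 \le \rho$ gives $\|e_k\| \le (2L\hat L^{\varepsilon}C_F^2)^{-1/\varepsilon}$, one gets $2LC_F^2\|r_k\|^{\varepsilon} \le 2LC_F^2\hat L^{\varepsilon}\|e_k\|^{\varepsilon} \le 1$, so the bracket is $\ge 1-\mu\hat L^2 > 0$ by the first condition in (\ref{eq:mucond-h})---this is precisely what fixes the stated value of $\rho$. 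Hence $\|e_{k+1}\|^2 \le \|e_k\|^2 - \mu(1-\mu\hat L^2)\|r_k\|^2 \le \|e_k\|^2$, and an induction from (\ref{converge-radius}) gives $\frac{1}{2}\|e_k\|^2 \le \rho$ for all $k$, so all iterates remain in $\mathcal{B}$ where the assumptions apply. Summing the monotonicity inequality gives $\mu(1-\mu\hat L^2)\sum_k\|r_k\|^2 \le \|e_0\|^2 < \infty$, so $\|F(x_k)-y\| \to 0$, and then (\ref{stab-Hilbert-Holder}) forces $\|x_k - x^\dag\| \to 0$ directly; weak closedness, Assumption~\ref{assumption:forward-operator-Hilbert-Holder}(b), is not even needed here.

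For the rates, return to $\|e_{k+1}\|^2 \le \|e_k\|^2 - \mu(1-\mu\hat L^2)\|r_k\|^2$ and use the stability the other way, $\|r_k\|^2 \ge (2C_F^2)^{-2/(1+\varepsilon)}\|e_k\|^{4/(1+\varepsilon)}$. With $a_k = \frac{1}{2}\|e_k\|^2$ this collapses to $a_{k+1} \le a_k - c\,a_k^{2/(1+\varepsilon)}$, with $c$ as in the statement; one checks from (\ref{eq:mucond-h}) that $0 < c < 1$. If $\varepsilon = 1$ the exponent is $1$, so $a_{k+1}\le(1-c)a_k$ and $a_k \le \rho(1-c)^k$ follows from $a_0\le\rho$, which is (\ref{convergence-rate}). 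If $\varepsilon\in(0,1)$, put $s = (1-\varepsilon)/(1+\varepsilon)\in(0,1)$, so $2/(1+\varepsilon) = 1+s$; assuming $a_k > 0$ (the case $x_k = x^\dag$ being trivial, as the iteration is then stationary), convexity of $t\mapsto t^{-s}$ gives $a_{k+1}^{-s} - a_k^{-s} \ge s\,a_k^{-s-1}(a_k - a_{k+1}) \ge s\,a_k^{-s-1}\,c\,a_k^{1+s} = cs$, so $a_k^{-s} \ge \rho^{-s} + csk$, and inverting this yields exactly (\ref{convergence-rate-h}). The one delicate point I expect is the bookkeeping of constants so that the bracket estimate closes precisely with the $\rho$ and $c$ in the statement; the sub-linear recursion estimate for $\varepsilon < 1$ is the standard convexity/telescoping argument and is otherwise routine.
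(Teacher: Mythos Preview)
Your argument is correct and is essentially the Hilbert-space specialization of the paper's own proof. The paper does not prove this theorem directly; it defers to the Banach-space result (Theorem~\ref{thm:Banach-converge}), whose proof, upon setting $p=q=2$, $\Delta_2(x,\tilde x)=\tfrac12\|x-\tilde x\|^2$, $C_p=G_q=1$, reproduces precisely your expansion of $\|e_{k+1}\|^2-\|e_k\|^2$, the linearization bound $\|r_k-DF(x_k)e_k\|\le\tfrac{L}{2}\|e_k\|^2$, the substitution $\|e_k\|^2\le 2C_F^2\|r_k\|^{1+\varepsilon}$, the induction on $\tfrac12\|e_k\|^2\le\rho$ via $\|r_k\|^\varepsilon\le\hat L^\varepsilon\|e_k\|^\varepsilon$, and the recursion $a_{k+1}\le a_k-c\,a_k^{2/(1+\varepsilon)}$. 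Two cosmetic differences: the paper concludes $\gamma_k\to 0$ by passing to the limit in the recursion rather than by summing, and for $\varepsilon\in(0,1)$ it uses the Bernoulli-type inequality $(1-x)^{-s}\ge 1+sx$ in place of your convexity-of-$t^{-s}$ argument; both routes are standard and yield the same telescoping bound. Your remark that Assumption~\ref{assumption:forward-operator-Hilbert-Holder}(b) is not actually invoked is also accurate.
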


\medskip\medskip

The proof is a special case of the Banach space setting,
cf.~Theorem~\ref{thm:Banach-converge}; see Section~\ref{sec:4}. The
convergence is sublinear if $0<\varepsilon < 1$ and the speed up as
$\varepsilon \rightarrow 1$ relates to the fact that it switches to a
linear convergence.

For the critical index $\varepsilon = 0$, that is, the power in the
right-hand side of the stability inequality
(\ref{stab-Hilbert-Holder}) equals to $\frac{1}{2}$, we need to invoke
an assumption on the stability constant $C_F$ to arrive at the
convergence and convergence rate results. An interesting by-product is
that the convergence radius only depends on the radius within which
the H\"{o}lder stability (\ref{stab-Hilbert-Holder}) holds. Hence, if
the forward operator $F$ satisfies (\ref{stab-Hilbert-Holder})
globally, then we obtain a global convergence and convergence rate
result.

\medskip\medskip

\begin{theorem}\label{thm:critical-index}
Assume there exists a solution $x^\dag$ to (\ref{operator-equ}) and
that Assumption~\ref{assumption:forward-operator-Hilbert-Holder} holds
with $\varepsilon = 0$. Furthermore, assume that
\begin{equation}
   \| DF(x) \| \le \hat{L} \quad \forall x \in \mathcal{B}.
\end{equation}
Let the stability constant $C_F$ and the positive stepsize $\mu$
satisfy that
\begin{equation}\label{eq:mucond-h-cri}
 \mu \hat{L}^2 +  2 L C_F^2 < 2 .
\end{equation}
Then the iterates satisfy
\[
x_k \rightarrow x^\dag \mbox{ as } k \rightarrow \infty.
\]
Moreover, let
\begin{equation}
   c =\frac{\mu}{4}(-2 + \mu \hat{L}^2 +  2L C_F^2) C_F^{-4} .
\end{equation}
The convergence rate is given by
\begin{equation}\label{convergence-rate-cr}
   \frac{1}{2}\| x_k - x^\dag \|^2 \le
        (2\| x_0 - x^\dag \|^{-2} + ck)^{-1} .
\end{equation}
\end{theorem}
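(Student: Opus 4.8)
The plan is to carry out the energy-decay argument behind the preceding theorem (the Hilbert-space result for $\varepsilon>0$, whose proof goes through the Banach setting of Theorem~\ref{thm:Banach-converge}), now specialised to $\varepsilon=0$, where the slack one normally buys by shrinking the radius disappears and must be replaced by the smallness of constants in (\ref{eq:mucond-h-cri}). Write $e_k=x_k-x^\dag$, $D_k=\half\|e_k\|^2$ and $s_k=F(x_k)-y=F(x_k)-F(x^\dag)$. Since $X$ and $Y$ are Hilbert the iteration is (\ref{Landweber-iter-Hilbert}), and expanding the square gives
\[
   D_{k+1}=D_k-\mu\,\langle s_k,DF(x_k)e_k\rangle+\frac{\mu^2}{2}\,\|DF(x_k)^*s_k\|^2 .
\]
I would first bound the two corrections: the quadratic term is $\le\frac{\mu^2}{2}\hat L^2\|s_k\|^2$ by the hypothesis $\|DF(x)\|\le\hat L$ on $\mathcal B$, and for the linear term, integrating the local Lipschitz bound on $DF$ (Assumption~\ref{assumption:forward-operator-Hilbert-Holder}) along the segment $[x^\dag,x_k]\subset\mathcal B$ gives the Taylor remainder estimate $\|s_k-DF(x_k)e_k\|\le\frac{L}{2}\|e_k\|^2=L\,D_k$, whence $\langle s_k,DF(x_k)e_k\rangle\ge\|s_k\|^2-L\,D_k\|s_k\|$ by Cauchy--Schwarz.

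Next I would invoke the H\"{o}lder stability (\ref{stab-Hilbert-Holder}) at $\varepsilon=0$, i.e.\ $D_k\le C_F^2\|s_k\|$, and use it to absorb the loose factor $D_k$ in $L\,D_k\|s_k\|$. Collecting the three contributions gives the one-step inequality
\[
   D_{k+1}\le D_k-\frac{\mu}{2}\bigl(2-\mu\hat L^2-2LC_F^2\bigr)\|s_k\|^2 ,
\]
whose bracket is strictly positive precisely by (\ref{eq:mucond-h-cri}). In particular $D_{k+1}\le D_k$, so the squared residuals are monotone; an induction on $k$ then keeps the whole orbit in $\mathcal B$ (by this monotonicity, together with the nesting of balls in the hypotheses), so that the three assumptions may be used at every step. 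Applying (\ref{stab-Hilbert-Holder}) once more in the form $\|s_k\|^2\ge C_F^{-4}D_k^2$ turns this into the scalar recursion $D_{k+1}\le D_k-c\,D_k^2$ with the constant $c$ of (\ref{convergence-rate-cr}), which is positive exactly by (\ref{eq:mucond-h-cri}).

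To conclude I would use the elementary scalar estimate: since $D_{k+1}\ge0$, the recursion forces $D_k\le 1/c$, so $1-cD_k>0$ (equality would give $x_{k+1}=x^\dag$ and termination), and hence $D_{k+1}^{-1}\ge\bigl(D_k(1-cD_k)\bigr)^{-1}\ge(1+cD_k)D_k^{-1}=D_k^{-1}+c$; telescoping gives $D_k^{-1}\ge D_0^{-1}+ck$, that is $\half\|x_k-x^\dag\|^2\le\bigl(2\|x_0-x^\dag\|^{-2}+ck\bigr)^{-1}$, which is (\ref{convergence-rate-cr}), and in particular $D_k\to0$, i.e.\ $x_k\to x^\dag$. (Bare convergence is cheaper still: $D_k\downarrow D_\infty$ and passing to the limit in the recursion forces $D_\infty=0$.) The weak sequential closedness of $F$ is not used in this statement.

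The step I expect to be the real obstacle --- and the reason $\varepsilon=0$ is the \emph{critical} index --- is keeping $2-\mu\hat L^2-2LC_F^2$ positive. For $\varepsilon\in(0,1]$ the analogous error term carries an extra factor $\|s_k\|^{\varepsilon}$, which can be made arbitrarily small by confining the iterates to a ball of small radius; this is why the preceding theorem needs only a radius condition and imposes nothing on the size of $C_F$. At $\varepsilon=0$ that factor is identically $1$: no free small parameter remains, and one has no choice but to assume the constants small, which is exactly (\ref{eq:mucond-h-cri}) --- with the pleasant by-product that the radius within which stability holds is the only radius entering the statement. The one remaining point requiring care is the bookkeeping that keeps the orbit in $\mathcal B$ although no a priori bound on $\|x_0-x^\dag\|$ is imposed; this is supplied by the monotonicity of $D_k$ together with the arrangement of the balls $\mathcal B_\rho(x_0)\subset\mathcal B_{\rho'}(x_0)=\mathcal B$.
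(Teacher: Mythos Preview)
Your argument is correct and is essentially the paper's own proof, carried out directly in the Hilbert setting rather than by specialising the Banach result. The paper merely states that Theorem~\ref{thm:critical-index} is the Hilbert case of Theorem~\ref{thm:critical-index-Banach}, whose proof reuses the key inequality (\ref{residual-bound}) with $\varepsilon=0$ and then applies stability twice to reach $\gamma_{k+1}\le\gamma_k-c\gamma_k^2$; your expansion of $D_{k+1}$, Taylor-remainder bound, and two applications of (\ref{stab-Hilbert-Holder}) reproduce exactly those steps with $p=q=2$, $G_q=C_p=1$. Your telescoping at the end (using $D_{k+1}\ge0$ to force $cD_k\le1$, then $(1-cD_k)^{-1}\ge1+cD_k$) is the same mechanism the paper uses for $\varepsilon\in(0,1)$ in Theorem~\ref{thm:Banach-converge}, specialised to the exponent~$1$. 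One incidental observation: your computation yields the positive constant $c=\tfrac{\mu}{2}(2-\mu\hat L^2-2LC_F^2)C_F^{-4}$, which is what the rate (\ref{convergence-rate-cr}) actually requires; the displayed $c$ in the paper's statement carries a sign slip.
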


\medskip\medskip

The proof is again a special case of the Banach space setting,
cf.~Theorem~\ref{thm:Banach-converge}; see Section~\ref{sec:4}.

\medskip\medskip

\begin{remark}
The convergence radius condition (\ref{converge-radius}) on the
starting point $x_0$ may be replaced by a convergence radius condition
on the starting simulated data $F(x_0)$,
\begin{equation}\label{converge-radius-data-H}
   \|F(x_0) - y\|^{ 1 + \varepsilon } \le \rho C_F^{-2} .
\end{equation}
In fact, with the aid of the stability inequality
(\ref{stab-Hilbert-Holder}), (\ref{converge-radius}) follows from
(\ref{converge-radius-data-H}).
\end{remark}

\section{Convergence rate and radius of convergence -- Banach spaces}
\label{sec:4}

In this section, we discuss the convergence and convergence rate of
the Landweber iteration (\ref{Landweber-iter-Banach}) in
Banach spaces. Let $\mathcal{B}_{\rho}(x_0)$ denote a closed ball
centered at $x_0$ with radius $\rho$, and $\mathcal{B} =
\mathcal{B}^{\Delta}_{\rho}(x^{\dag})$ denote a ball with respect to
the Bregman distance centered at some solution $x^{\dag}$. We assume
that $\mathcal{B}^{\Delta}_{\rho}(x^{\dag}) \subset \mathcal{D}(F)$.

\medskip\medskip

\begin{assumption}\label{assumption:forward-operator}
\begin{enumerate}[(a)]
\item The Fr\'{e}chet derivative, $DF$, of $F$ is Lipschitz
  continuous locally in $\mathcal{B}$ and
  \begin{equation}
   \| DF(x) - DF(\tilde{x}) \| \le L \| x - \tilde{x} \|
   \quad \forall x, \tilde{x} \in \mathcal{B}.
\end{equation}
\item $F$ is weakly sequentially closed, that is,
\begin{equation*}
   \left. \begin{array}{rl}
      & x_n \rightharpoonup x ,
\\
      & F(x_n) \rightarrow y
   \end{array} \right\} \Rightarrow
   \left\{ \begin{array}{rl}
      & x \in \mathcal{D}(F) ,
\\
      & F(x) = y .
   \end{array} \right.
\end{equation*}
\item The inversion has the uniform H\"{o}lder type stability, that is,
  there exists a constant $C_F > 0$ such that
\begin{equation}\label{stab-Banach}
   \Delta_p(x,\tilde{x}) \le
   C_F^p \|F(x) - F(\tilde{x})\|^{\frac{1+\varepsilon}{2}p}
   \quad \forall x, \tilde{x} \in \mathcal{B} ,
\end{equation}
for some $\varepsilon \in (0,1]$.
\end{enumerate}
\end{assumption}

\medskip\medskip

\begin{remark}
Note that the nonemptyness of the interior (with respect to norm) of
$\mathcal{D}(F)$ is sufficient for $\mathcal{B} \subset
\mathcal{D}(F)$.
\end{remark}

\medskip\medskip

\begin{remark}
With the assumption that $X$ is $p$-convex, (\ref{stab-Banach}) with
(\ref{Bregman-norm-rela1}) implies the regular notion of H\"{o}lder
stability in norm.
\end{remark}

\medskip\medskip

\begin{remark}
Under the Lipschitz type stability assumption, that is,
(\ref{stab-Banach}) with $\varepsilon =1$, we have that
\begin{equation*}
\begin{array}{rl}
\langle J_p(x^\dag), x - x^\dag\rangle  \le & \|x^\dag\|^{p-1} \|x - x^\dag\|
\\[0.2cm]
\le & C \Delta_p(x, x^\dag)^{1/p}
\\[0.2cm]
\le & CC_F \|F(x) - F(x^\dag)\|, \quad \forall x\in \mathcal{B}
\end{array}
\end{equation*}
for some constant $C>0$. It has been shown in \cite{Scherzer2009} that
this implies the source condition,
\begin{equation*}
   J_p(x^\dag) = DF(x^\dag)^* \omega
\end{equation*}
for some $\omega$ satisfying $\|\omega\| \le 1$.
\end{remark}

\medskip\medskip

\begin{theorem}\label{thm:Banach-converge}
Let $Y$ be a general Banach space, and $X$ be a Banach space which is $p$-convex and $q$-smooth with $1/p + 1/q =1$. Assume there exists a solution $x^\dag$ to (\ref{operator-equ}) and
that Assumption~\ref{assumption:forward-operator} holds. Furthermore,
assume that
\begin{equation}\label{assumption-L}
   \| DF(x) \| \le \hat{L} \quad \forall x \in \mathcal{B}.
\end{equation}
Let the positive stepsize, $\mu$, be such that
\begin{equation}\label{mu-Banach}
\begin{array}{rl}
   \mu^{q-1} < & \dst{\frac{q}{2 G_q \hat{L}^q}} ,
\\
   \dst{\mu \left( \frac{1}{2} -
   \frac{G_q \hat{L}^q}{q} \mu^{q-1}\right)}
   < & C_F^{\frac{2p}{1+\varepsilon}}.
\end{array}
\end{equation}
Let
\begin{equation*}
\rho = \hat{L}^{-p}(L C_F^2)^{-\frac{p}{\varepsilon}} \left(  \frac{C_p}{p} \right)^{1+\frac{2}{\varepsilon}}.
\end{equation*}
If
\begin{equation}\label{converge-radius-Banach}
   \Delta_p(x_0,x^\dag) \le \rho ,
\end{equation}
then the iterates satisfy
\begin{equation}
   \Delta_p(x_k,x^\dag) \le \rho ,\quad k=1,2,\ldots
\end{equation}
and $\Delta_p(x_k,x^\dag) \rightarrow 0
   \mbox{ as } k \rightarrow \infty.$
Moreover, let
\begin{equation}   c = C_F^{-\frac{2p}{1 + \varepsilon}} \left(\frac{1}{2} \mu
              - \frac{G_q}{q} \mu^q \hat{L}^q \right). \end{equation}
 The convergence rate is given by
\begin{equation}\label{convergence-rate-Banach}
   \Delta_p(x_k,x^\dag) \le \rho (1 - c)^{k},
\end{equation} if $\varepsilon = 1$.
For $\varepsilon \in (0,1)$,
the convergence rate is given by
\begin{equation}\label{conv-rate-Ba-h}
\Delta_p(x_k, x^\dag) \le \left(ck \frac{1 - \varepsilon}{1+\varepsilon} + \rho^{-\frac{1 - \varepsilon}{1+\varepsilon}} \right)^{-\frac{1 + \varepsilon}{1 - \varepsilon}}, \quad k = 0,1,\dots .
\end{equation}

\end{theorem}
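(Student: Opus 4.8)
The plan is to establish a single master descent inequality of the form
$\Delta_p(x_{k+1},x^\dag) \le \Delta_p(x_k,x^\dag) - c\,\Delta_p(x_k,x^\dag)^{\frac{2}{1+\varepsilon}}$
valid whenever $x_k \in \mathcal{B}^\Delta_\rho(x^\dag)$, and then to feed this into an elementary recursion lemma for sequences of nonnegative reals. First I would compute $\Delta_p(x_{k+1},x^\dag)$ using the three-point identity for Bregman distances together with the update rule $J_p(x_{k+1}) = J_p(x_k) - \mu\,DF(x_k)^* j_p(F(x_k)-y)$. This yields
$\Delta_p(x_{k+1},x^\dag) = \Delta_p(x_k,x^\dag) - \mu\,\langle DF(x_k)^* j_p(F(x_k)-y),\, x_k - x^\dag\rangle + \Delta_q(J_p(x_k), J_p(x_{k+1}))$,
where the dual-space Bregman term is controlled by the $q$-smoothness estimate \eqref{Bregman-norm-rela2}, giving $\Delta_q(J_p(x_k),J_p(x_{k+1})) \le \frac{G_q}{q}\mu^q \|DF(x_k)^* j_p(F(x_k)-y)\|^q \le \frac{G_q}{q}\mu^q \hat L^q \|F(x_k)-y\|^{(p-1)q} = \frac{G_q}{q}\mu^q\hat L^q\|F(x_k)-y\|^p$, using \eqref{assumption-L} and $\|j_p(z)\| = \|z\|^{p-1}$.

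The cross term is where the nonlinearity enters. I would write $\langle DF(x_k)^* j_p(F(x_k)-y), x_k-x^\dag\rangle = \langle j_p(F(x_k)-y), DF(x_k)(x_k-x^\dag)\rangle = \|F(x_k)-y\|^p - \langle j_p(F(x_k)-y),\, (F(x_k)-y) - DF(x_k)(x_k-x^\dag)\rangle$, and then bound the Taylor remainder. From Lipschitz continuity of $DF$ (Assumption~\ref{assumption:forward-operator}(a)), $\|(F(x_k)-F(x^\dag)) - DF(x_k)(x_k-x^\dag)\| \le \frac{L}{2}\|x_k-x^\dag\|^2$; combined with \eqref{Bregman-norm-rela1} this is $\le \frac{L}{2}\big(\frac{p}{C_p}\Delta_p(x_k,x^\dag)\big)^{2/p}$, and the pairing with $j_p$ loses a factor $\|F(x_k)-y\|^{p-1}$. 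So the cross term is at least $\|F(x_k)-y\|^p - C\,\|F(x_k)-y\|^{p-1}\Delta_p(x_k,x^\dag)^{2/p}$. The role of the smallness condition \eqref{converge-radius-Banach} and the precise value of $\rho$ is exactly to guarantee that this residual correction is dominated by, say, half of $\|F(x_k)-y\|^p$, so that the net cross term is bounded below by a fixed positive multiple of $\|F(x_k)-y\|^p$. Assembling, and choosing $\mu$ per \eqref{mu-Banach} so that $\frac{\mu}{2} - \frac{G_q}{q}\mu^q\hat L^q > 0$, I get
$\Delta_p(x_{k+1},x^\dag) \le \Delta_p(x_k,x^\dag) - \big(\tfrac12\mu - \tfrac{G_q}{q}\mu^q\hat L^q\big)\|F(x_k)-y\|^p$,
and then invoke the Hölder stability \eqref{stab-Banach} in the form $\|F(x_k)-y\|^p = \|F(x_k)-F(x^\dag)\|^p \ge C_F^{-\frac{2p}{1+\varepsilon}}\Delta_p(x_k,x^\dag)^{\frac{2}{1+\varepsilon}}$ to reach the master inequality with the stated constant $c$.

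Next I would check invariance of the ball: since $\Delta_p(x_{k+1},x^\dag) \le \Delta_p(x_k,x^\dag)$ by the master inequality (the subtracted term is nonnegative), $x_k \in \mathcal{B}^\Delta_\rho(x^\dag)$ for all $k$ by induction, which legitimizes the repeated use of the assumptions; weak sequential closedness (Assumption~\ref{assumption:forward-operator}(b)) is what one uses if one wants to identify limit points as solutions, but here monotone convergence of $\Delta_p(x_k,x^\dag)$ plus the summability of $\|F(x_k)-y\|^p$ forces $\Delta_p(x_k,x^\dag)\to 0$ directly. Finally, the convergence rates follow from the recursion lemma: when $\varepsilon=1$ the master inequality reads $a_{k+1}\le (1-c)a_k$, giving \eqref{convergence-rate-Banach} after noting $c<1$ (which follows from the second line of \eqref{mu-Banach}); when $\varepsilon\in(0,1)$ one has $a_{k+1}\le a_k - c\,a_k^{1+\theta}$ with $\theta = \frac{1-\varepsilon}{1+\varepsilon}>0$, and the standard lemma that such a sequence satisfies $a_k \le (a_0^{-\theta} + c\theta k)^{-1/\theta}$ gives \eqref{conv-rate-Ba-h}. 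The main obstacle is the bookkeeping in the cross-term estimate: tracking how the powers of $\|F(x_k)-y\|$ and $\Delta_p(x_k,x^\dag)^{2/p}$ combine with the Hölder exponent so that the $\rho$-smallness condition yields precisely the absorption needed, and doing so with constants that match the stated $\rho$ and $c$; everything else is either a cited Bregman identity or the elementary recursion lemma.
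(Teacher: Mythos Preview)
Your proposal is correct and follows essentially the same route as the paper: the three-point Bregman identity combined with the $q$-smoothness bound \eqref{Bregman-norm-rela2}, the Taylor-remainder estimate via Lipschitz $DF$, absorption of the remainder using the specific choice of $\rho$, and finally the H\"older stability to obtain the recursion $\gamma_{k+1}\le\gamma_k-c\gamma_k^{2/(1+\varepsilon)}$ and the elementary rate lemma. One small clarification: the absorption step itself already requires the H\"older stability \eqref{stab-Banach} (to convert $\|x_k-x^\dag\|^2$ into $C_F^2(p/C_p)^{2/p}\|F(x_k)-y\|^{1+\varepsilon}$) together with the mean value inequality and the $\rho$-bound (to control $\|F(x_k)-y\|^{\varepsilon}$), rather than the $\rho$-smallness alone; this is exactly the bookkeeping you flag at the end.
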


\begin{proof}
Using (\ref{eq:BDpq}) and (\ref{Duality-map-def}), we obtain, for
the sequence of residues,
\begin{equation}\label{iterative-bound}
\begin{array}{rl}
   & \Delta_p(x_{k+1},x^\dag)
\\[0.2cm]
   = & \dst{\Delta_p(x_{k},x^\dag) + \frac{1}{q} \left(
     \|x_{k+1}\|^p - \|x_{k}\|^p \right)
         - \langle J_p(x_{k+1}) - J_p(x_{k}) , x^\dag \rangle}
\\[0.2cm]
   = & \dst{\Delta_p(x_{k},x^\dag) + \frac{1}{q} \left(
     \|J_p(x_{k+1})\|^q  - \|J_p(x_{k})\|^q \right)}
\\[0.0cm]
     &   - \langle J_p(x_{k+1}) - J_p(x_{k}) , x^\dag \rangle .
\end{array}
\end{equation}
Applying (\ref{eq:BDpq}) and (f) of Theorem~\ref{thm:Bregman-distance}
with $x^* = J_p(x_{k+1})$ and $\tilde{x}^* = J_p(x_k)$, we get
\begin{equation}
\begin{array}{rl}
   & \displaystyle{\frac{1}{q} \left( \|J_p(x_{k+1})\|^q
                        - \|J_p(x_{k})\|^q \right)}
   \\[0.2cm]
   & \hspace*{1.5cm} \le \displaystyle{\frac{G_q}{q} \|J_p(x_{k+1}) - J_p(x_{k})\|^q
        + \langle J_p(x_{k+1}) - J_p(x_{k}) , x_k \rangle .}
\end{array}
\end{equation}
Substituting (\ref{Landweber-iter-Banach}) and using this inequality
in (\ref{iterative-bound}) yields
\begin{equation}\label{iterative-bound-2}
\begin{array}{rl}
   &\Delta_p(x_{k+1}, x^\dag) - \Delta_p(x_{k}, x^\dag)
\\
   \le & \frac{G_q}{q} \|\mu DF(x_k)^* j_p(F(x_k) - y)
             \|^q
\\ - &\langle \mu DF(x_k)^* j_p(F(x_k) - y) , x_k - x^\dag \rangle.
\end{array}
\end{equation}

We estimate each term in (\ref{iterative-bound-2}) separately. The
first term satisfies the estimate
\begin{equation}
 \dst{\frac{G_q}{q} \|\mu DF(x_k)^* j_p(F(x_k) - y)\|^q}
   \le  \dst{\frac{G_q}{q} \mu^q \hat{L}^q \|(F(x_k) - y)\|^p} .
\end{equation}
For the second term, we have that
\begin{equation}
\begin{array}{rl}
   & - \langle \mu DF(x_k)^* j_p(F(x_k) - y) , x_k - x^\dag \rangle
\\[0.2cm]
   = & -\mu  \langle j_p(F(x_k) - y) , DF(x_k)(x_k - x^\dag) \rangle
\\[0.2cm]
   = & -\mu ( \langle j_p(F(x_k) - y) , F(x_k) - y \rangle
\\[0.0cm]
     & \hspace*{2.0cm} - \langle j_p(F(x_k) - y) ,
             F(x_k) - y - DF(x_k)(x_k - x^\dag) \rangle ).
\end{array}
\end{equation}
Note that, by the fundamental theorem of calculus for Fr\'echet derivative, we obtain that
\begin{equation}
\|F(x_k) - y - DF(x_k)(x_k - x^\dag)\| \le \frac{L}{2}\|x_k - x^\dag\|^2.
\end{equation}
Then, using (\ref{Bregman-norm-rela1}) and
stability (c) of
Assumption~\ref{assumption:forward-operator}, we have
\begin{equation}
\begin{array}{rl}
   & - \langle \mu DF(x_k)^* j_p(F(x_k) - y) , x_k - x^\dag \rangle
\\[0.2cm]
   = & -\mu \|F(x_k) - y\|^p
\\[0.0cm]
     & \hspace*{2.0cm}
     + \mu \langle j_p(F(x_k) - y) ,
             F(x_k) - y - DF(x_k)(x_k - x^\dag) \rangle
\\[0.2cm]
   \le & \dst{-\mu \|F(x_k) - y\|^p
            + \frac{\mu}{2} L \|(F(x_k) - y)\|^{p-1} \|x_k - x^\dag\|^2}
\\[0.0cm]
   \le & \dst{-\mu \|F(x_k) - y\|^p
          + \frac{\mu}{2} L C_F^2 \left(\frac{p }{C_p}\right)^{2/p}
                        \|F(x_k) - y\|^{p+\varepsilon}} .
\end{array}
\end{equation}
Combining these estimates and using the notation
\begin{equation*}
   \gamma_k = \Delta_p(x_k,x^\dag) ,
\end{equation*}
we obtain
\begin{equation}\label{residual-bound}
\begin{array}{rl}
   \gamma_{k+1} - \gamma_k \le &
   \dst{\left(\frac{G_q}{q} \mu^q \hat{L}^q
               - \frac{1}{2} \mu \right) \|F(x_k) - y\|^p}
\\[0.2cm]
   - & \dst{\frac{1}{2} \mu \|F(x_k) - y\|^p
           + \frac{\mu}{2} L C_F^2 \left(\frac{p}{C_p}\right)^{2/p}
                      \|F(x_k) - y\|^{p+\varepsilon}}.
\end{array}
\end{equation}
We claim that
\begin{equation}
   \gamma_{k+1} = \Delta_p(x_{k + 1},x^\dag)
      \le \rho ,
\end{equation}
which we prove by induction. Assume that
\begin{equation}\label{induction-assumption-Banach}
   \Delta_p(x_m,x^\dag) \le \rho
\end{equation}
holds for $m = 0,1,\dots, k$. With the mean value inequality, it follows that
\begin{equation}
   \|F(x_m) - y\|^\varepsilon \le \hat{L}^\varepsilon\left(\frac{p}{C_p}\rho\right)^{\frac{\varepsilon}{p}} = \frac{1}{ L C_F^2
        (p / C_p)^{2/p}} ,\quad m = 0,1,2,\dots,k .
\end{equation}
Therefore,
\begin{equation}
   - \half \mu \|F(x_m) - y\|^p
         + \half \mu L C_F^2  (p / C_p)^{2/p} \|F(x_m) - y\|^{p+\varepsilon}
                  \le 0 ,
\end{equation}
$m = 0,1,2,\dots,k .$
Dropping this non-positive term, we obtain
\begin{equation}\label{inductive-inequality-Banach}
\begin{array}{rl}
   \gamma_{k+1} - \gamma_k \le &
   \dst{\left( \frac{G_q}{q} \mu^q \hat{L}^q
           - \frac{1}{2}\mu \right) \|F(x_k) - y\|^p}.
\end{array}
\end{equation}
Note that the term $\dst{\left( \frac{G_q}{q} \mu^q \hat{L}^q
           - \frac{1}{2}\mu \right) \|F(x_k) - y\|^p}$ is non-positive. We obtain that
\begin{equation}
   \Delta_p(x_{k+1} , x^\dag)
   \le \rho ,
\end{equation}
which establishes the claim.

Now, we return to (\ref{inductive-inequality-Banach}). By the
H\"{o}lder type stability (\ref{stab-Banach}), we have that
\begin{equation}\label{indu-inequ}
   \gamma_{k+1} \le \gamma_k - c \gamma_k^{\frac{2}{1+\varepsilon}}
\end{equation}
Note that, by the conditions on $\mu$, we have $0 < c$. By letting $k$
go to infinity on both sides of the above inequality, we conclude that
\begin{equation*}
   \gamma_k \rightarrow 0 \mbox{ as } k\rightarrow \infty .
\end{equation*}

In the remainder of the proof, we obtain the convergence rate. Note
that, with the choice (\ref{mu-Banach}) of $\mu$,
\begin{equation}\label{eq:cest}
   0 <  c < 1 .
\end{equation}
With $\varepsilon = 1$, we have
\begin{equation}\label{eq:convrateH}
   \gamma_{k+1} \le (1 - c) \gamma_k
\end{equation}
which expresses the convergence rate (\ref{convergence-rate-Banach}).

For the convergence rate with $\varepsilon \in (0,1)$, from (\ref{indu-inequ}), we obtain that
\[
\gamma_{k+1}^{- \frac{1 - \varepsilon}{1+\varepsilon}} \ge \gamma_k^{- \frac{1 - \varepsilon}{1+\varepsilon}} (1 - c \gamma_k^{\frac{1 - \varepsilon}{1+\varepsilon}})^{- \frac{1 - \varepsilon}{1+\varepsilon}}.
\]
Noting that
\[
(1 - x)^{- \frac{1 - \varepsilon}{1+\varepsilon}} \ge 1 + \frac{1 - \varepsilon}{1+\varepsilon} x \quad \forall x\in (0,1),
\]
we have that
\[
\gamma_{k+1}^{- \frac{1 - \varepsilon}{1+\varepsilon}} \ge \gamma_k^{- \frac{1 - \varepsilon}{1+\varepsilon}} + c \frac{1 - \varepsilon}{1+\varepsilon}.
\]
It follows that
\[
\gamma_k \le \left(ck \frac{1 - \varepsilon}{1+\varepsilon} + \gamma_0^{-\frac{1 - \varepsilon}{1+\varepsilon}} \right)^{-\frac{1 + \varepsilon}{1 - \varepsilon}} \le \left(ck \frac{1 - \varepsilon}{1+\varepsilon} + \rho^{-\frac{1 - \varepsilon}{1+\varepsilon}} \right)^{-\frac{1 + \varepsilon}{1 - \varepsilon}}, \quad k = 0,1,\dots .
\]
\end{proof}
%
%

\medskip \medskip

For the critical index $\varepsilon =0$, we obtain

\medskip \medskip

\begin{theorem}\label{thm:critical-index-Banach}
Let $Y$ be a general Banach space, and $X$ be a Banach space which is $p$-convex and $q$-smooth with $1/p + 1/q =1$. Assume there exists a solution $x^\dag$ to (\ref{operator-equ}) and
that Assumption~\ref{stab-Banach} holds with $\varepsilon =
0$. Furthermore, assume that
\begin{equation}
   \| DF(x) \| \le \hat{L} \quad \forall x \in \mathcal{B},
\end{equation}
and that the stability constant $C_F$ and the positive stepsize $\mu$
satisfy the inequality
\begin{equation}\label{eq:mucond-h-cri-Ba}
 \mu^{q-1} < \frac{q}{G_q \hat{L}^q} \left(1 - \frac{1}{2} L C_F^2\left(\frac{p}{C_p}\right)^{\frac{2}{p}}\right).
\end{equation}
Then the iterates satisfy
\[
\Delta_p (x_k,x^\dag) \rightarrow 0 \mbox{ as } k \rightarrow \infty.
\]
Moreover, let
\begin{equation}   c = \mu\left(\frac{G_q}{q} \mu^{q-1} \hat{L}^q - 1 + \frac{1}{2}LC_F^2 \left(\frac{p}{C_p}\right)^{\frac{2}{p}}\right)C_F^{-2p}. \end{equation}
The convergence rate is given by
\begin{equation}\label{convergence-rate-cr-Ba}
   \Delta_p (x_k,x^\dag) \le
          (\Delta_p (x_0,x^\dag)^{-1} + ck)^{-1} .
\end{equation}
\end{theorem}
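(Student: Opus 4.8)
The plan is to rerun the computation in the proof of Theorem~\ref{thm:Banach-converge} and to notice that at the critical index the two norm--power terms in the residual estimate coincide. The derivation of (\ref{residual-bound}) never used $\varepsilon>0$, so with $\gamma_k=\Delta_p(x_k,x^\dag)$ it still gives
\begin{equation*}
   \gamma_{k+1}-\gamma_k \le \Bigl(\frac{G_q}{q}\mu^q\hat{L}^q-\frac{1}{2}\mu\Bigr)\|F(x_k)-y\|^p
   -\frac{1}{2}\mu\|F(x_k)-y\|^p+\frac{\mu}{2}LC_F^2\Bigl(\frac{p}{C_p}\Bigr)^{2/p}\|F(x_k)-y\|^{p+\varepsilon}.
\end{equation*}
For $\varepsilon=0$ the last two terms both carry the exponent $p$ and combine, so
\begin{equation*}
   \gamma_{k+1}-\gamma_k \le \mu\Bigl(\frac{G_q}{q}\mu^{q-1}\hat{L}^q-1+\frac{1}{2}LC_F^2\bigl(\frac{p}{C_p}\bigr)^{2/p}\Bigr)\|F(x_k)-y\|^p ,
\end{equation*}
and by (\ref{eq:mucond-h-cri-Ba}) the bracketed factor is strictly negative.

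In particular $(\gamma_k)$ is nonincreasing, hence $\gamma_k\le\gamma_0=\Delta_p(x_0,x^\dag)\le\rho$ for all $k$ (using the standing assumption $x_0\in\mathcal{B}$); every iterate therefore stays in $\mathcal{B}=\mathcal{B}^{\Delta}_{\rho}(x^{\dag})$, so the Lipschitz bound of Assumption~\ref{assumption:forward-operator}(a), the bound (\ref{assumption-L}), and the stability estimate (\ref{stab-Banach}) with $\varepsilon=0$ remain available at every step. This is what replaces, in the critical case, the induction with a carefully tuned radius that was needed when $\varepsilon>0$: here no smallness of $\rho$ is required beyond $x_0\in\mathcal{B}$, which is why the statement carries no convergence radius depending on $L$ and $C_F$.

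Then I feed in stability. For $\varepsilon=0$ the estimate (\ref{stab-Banach}) reads $\gamma_k\le C_F^p\|F(x_k)-y\|^{p/2}$, i.e.\ $\|F(x_k)-y\|^p\ge C_F^{-2p}\gamma_k^2$. Substituting into the last display yields, with the constant $c$ of the statement (whose sign is fixed by (\ref{eq:mucond-h-cri-Ba})), the quadratic recursion
\begin{equation*}
   \gamma_{k+1}\le\gamma_k-c\,\gamma_k^2 .
\end{equation*}
Since $\gamma_{k+1}=\Delta_p(x_{k+1},x^\dag)\ge0$ this forces $c\gamma_k\le1$ (and if $c\gamma_k=1$ or $\gamma_k=0$ the iterates have reached $x^\dag$); otherwise, taking reciprocals and using $(1-t)^{-1}\ge1+t$ for $t\in[0,1)$ gives $\gamma_{k+1}^{-1}\ge\gamma_k^{-1}+c$, and telescoping yields $\gamma_k^{-1}\ge\gamma_0^{-1}+ck$, i.e.\ the rate (\ref{convergence-rate-cr-Ba}); in particular $\Delta_p(x_k,x^\dag)\to0$.

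The only genuine point is the first paragraph: at $\varepsilon=0$ one can no longer discard the ``bad'' term $\frac{\mu}{2}LC_F^2(p/C_p)^{2/p}\|F(x_k)-y\|^{p+\varepsilon}$ against the descent term $-\frac{1}{2}\mu\|F(x_k)-y\|^p$, since they are of the same order; instead one must force their combined coefficient to have the right sign, which is exactly what the joint condition (\ref{eq:mucond-h-cri-Ba}) on $\mu$ \emph{and} $C_F$ provides. Everything afterwards is the standard implication ``$\gamma_{k+1}\le\gamma_k-c\gamma_k^2$ with $c>0$ $\Rightarrow$ $\gamma_k=O(1/k)$'', exactly as in the corresponding step of the proof of Theorem~\ref{thm:Banach-converge}.
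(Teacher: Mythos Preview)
Your argument is essentially the paper's own proof: you invoke the residual estimate (\ref{residual-bound}) from Theorem~\ref{thm:Banach-converge}, set $\varepsilon=0$ so that the two norm-power terms merge, observe that (\ref{eq:mucond-h-cri-Ba}) forces the resulting coefficient to be negative, and then appeal to the stability inequality to obtain the quadratic recursion $\gamma_{k+1}\le\gamma_k-c\gamma_k^2$; the paper then simply says ``same arguments as in the proof of Theorem~\ref{thm:Banach-converge}'' for the rate, whereas you spell out the $(1-t)^{-1}\ge 1+t$ telescoping and the containment $x_k\in\mathcal{B}$ explicitly. There is no substantive difference in approach.
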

\medskip\medskip

\begin{proof}
Using (\ref{residual-bound}) in the proof of
Theorem~\ref{thm:Banach-converge} subject to the substitution
$\varepsilon = 0$, we obtain that
\[
   \gamma_{k+1} - \gamma_k \le
   \dst{\left(\frac{G_q}{q} \mu^q \hat{L}^q
               - \mu + \frac{\mu}{2} L C_F^2
   \left(\frac{p}{C_p}\right)^{2/p}\right) \|F(x_k) - y\|^p} .
\]
Note that, by (\ref{eq:mucond-h-cri-Ba}), the right-hand side of the
above inequality is non-positive and $0 <c <1$. Then, using the
H\"{o}lder type stability (\ref{stab-Banach}) with $\varepsilon =
0$, we have that
\begin{equation}
   \gamma_{k+1} \le  \gamma_k - c \gamma_k^2 .
\end{equation}
The convergence result and convergence rate
(\ref{convergence-rate-cr-Ba}) can be deduced by using the same
arguments as in the proof of Theorem~\ref{thm:Banach-converge}.
\end{proof}

\medskip\medskip

\begin{remark}
The H\"{o}lder type stability condition (\ref{stab-Hilbert-Holder}) or (\ref{stab-Banach}) is implied by a lower bound of the Fr\'{e}chet derivative $DF$. More precisely, if, there exists a constant $C$ such that
\[
\|DF(x)\left(\frac{x -x^\dag}{\|x - x^\dag\|} \right)\| \ge C \|x - x^\dag\|^{1 - \alpha} \quad \forall x\in B_r(x^\dag) \cap \mathcal{D}(F),
\]
for some $\alpha \in (0,1]$ and $r$ sufficiently small,
then, by combining this and
\[
\|F(\tilde{x}) - F(x) - DF(x)(\tilde{x} - x)\| \le \frac{L}{2}\|\tilde{x} - x\|^2 \quad \forall x, \tilde{x} \in \mathcal{D}(F)
\]
we obtain that
\[
\|x - x^\dag\| \le C_F \|F(x) - F(x^\dag)\|^{\frac{1}{2 - \alpha}} \quad \forall x\in B_r(x^\dag) \cap \mathcal{D}(F),
\]
for some constant $C_F$ depending on $C$ and $L$. The ill-posedness of
many inverse problems indicates that in general it is impossible to
obtain a lower bound for $DF$. If one projects the forward operator
$F$ properly, an estimate for the lower bound of $DF$ could be
obtained. Under various conditions, the lower bound for $DF$ has been
investigated in the analysis of inverse problems. For example, see
Calder\'{o}n \cite{Calderon1980}, Somersalo, Cheney \& Isaacson
\cite{Somersalo1992}, and Dobson \cite{Dobson1992} for the electrical
impedance tomography problem, Bao, Chen \& Ma \cite{Bao2000} for the
inverse medium problem associated with the Helmholtz equation, and
Ammari \& Bao \cite{Ammari2001} for the inverse medium problem for
electromagnetic waves.
\end{remark}

\section{Example: Electrical Impedance Tomography}
\label{sec:6}

In this section, we discuss Calder\'{o}n's inverse problem, which
forms the mathematical foundation of the Electrical Impedance
Tomography (EIT) problem \cite{Calderon1980}. For a recent review, we
refer to Uhlmann \cite{Uhlmann2009}. We mention some key uniqueness
results, namely, by Kohn \& Vogelius \cite{Kohn1984,Kohn1985},
Sylvester \& Uhlmann \cite{Sylvester1987}, and Astala \&
P\"{a}iv\"{a}rinta \cite{Astala2006}. Here, we focus on results
pertaining to stability; see Alessandrini
\cite{Alessandrini1988,Alessandrini1991,Alessandrini2007}. In
particular, we relate to the work of Alessandrini \& Vesella
\cite{Alessandrini2005} and Beretta \& Francini
\cite{BerettaToappear}, who establish a Lipschitz type stability
estimate if the conductivity is piecewise constant on a finite number
of subdomains with jumps, for the real-valued and complex-valued
cases, respectively.

\subsection{The Dirichlet-to-Neumann map}
\label{sec:6-1}

Let $\Omega\subset \mathbb{R}^n$ be a bounded domain with smooth
boundary. The electrical conductivity of $\Omega$ is represented by a
bounded and positive function $\gamma(x)$. Given a potential $f\in
H^{1/2}(\partial\Omega)$ on the boundary, the induced potential $u\in
H^1(\Omega)$ solves the Dirichlet problem
\[
\left\{
\begin{array}{rl}
\nabla \cdot (\gamma \nabla u) = & 0, \quad \mbox{ in } \Omega\\
u = & f, \quad \mbox{ on } \partial\Omega.
\end{array}
\right.
\]
The Dirichlet-to-Neumann map, or voltage-to-current map, is given by
\[
\Lambda_{\gamma}(f) = \left.\left(\gamma \frac{\partial u}{\partial \nu}\right)\right|_{\partial \Omega},
\]
where $\nu$ denotes the unit outer normal vector to $\partial \Omega$.

The forward operator $F$ is defined by
\begin{equation}\label{forward-operator}
\begin{array}{rrl}
F : & X\subset L_+^\infty(\Omega) \rightarrow & \mathcal{L}(H^{1/2}(\partial\Omega),H^{-1/2}(\partial\Omega)),\\
& \gamma \mapsto & \Lambda_\gamma.
\end{array}
\end{equation}
The Fr\'{e}chet derivative $DF$ of $F$ at $\gamma = \gamma_0$ is given
by
\begin{equation}\label{Frechet_operator_def}
\begin{array}{rrl}
DF(\gamma_0) : & X \subset L^\infty(\Omega) &\rightarrow \mathcal{L}(H^{1/2}(\partial \Omega) ,H^{-1/2}(\partial \Omega) )\\
    & \delta \gamma &\mapsto DF(\gamma_0)(\delta \gamma),
\end{array}
\end{equation}
and $DF(\gamma_0)(\delta \gamma)$ is given by
\begin{equation}\label{linearized_operator_def}
\langle DF(\gamma_0)(\delta \gamma) \,f, g\rangle = \int_{\Omega}  \delta \gamma \nabla u \cdot \nabla v  \dd x, \quad  f,g \in H^{1/2}(\partial\Omega)
\end{equation}
where
\[
\left\{
\begin{array}{rl}
\nabla \cdot (\gamma_0 \nabla u) = \nabla \cdot (\gamma_0 \nabla v) = 0, \quad &\mbox{ in } \Omega,\\
u =  f,\quad  v =  g\quad & \mbox{ on } \partial\Omega.
\end{array}
\right.
\]
We note that $L^\infty(\Omega)$ is not a uniformly convex Banach
space. Furthermore, to get the H\"{o}lder type stability, the preimage
space needs to be reduced. We specify the proper space $X$ in
Subsection~\ref{sec:6-3}.

For $n = 2$, Astala and P\"{a}iv\"{a}rinta proved that
$\Lambda_\gamma$ uniquely determines $\gamma$ under the assumption
that $\gamma \in L^{\infty}(\Omega)$. For $n\ge 3$,
P\"{a}iv\"{a}rinta, Panchenko and Uhlmann \cite{Paivarinta2003} proved
the uniqueness under the assumption that $\gamma \in
W^{3/2,\infty}(\Omega)$.

\subsection{Lipschitz stability}
\label{sec:6-2}

It is possible to obtain Lipschitz type stability, essentially, by
assuming that $\gamma$ belongs to a particular finite dimensional
space.

We write $x=(x', x_n)$ where $x'\in \mathbb{R}^{n-1}$ for $n\ge
2$. With $B_R(x)$, $B'_R(x')$ and $Q_R(x)$ we denote respectively the
open ball in $\mathbb{R}^n$ centered at $x$ of radius $R$, the ball in
$\mathbb{R}^{n-1}$ centered at $x'$ of radius $R$ and the cylinder
$B'_R(x') \times (x_n - R, x_n + R)$. For simplicity of notation,
$B_R(0)$, $B'_R(0)$ and $Q_R(0)$ are denoted by $B_R$, $B'_R$ and
$Q_R$.

\medskip\medskip

\begin{definition}\label{def:Lip-portion}
Let $\Omega$ be a bounded domain in $\mathbb{R}^n$. We say that
$\partial\Omega$ is of Lipschitz class with constants $r_0, L>0$ if,
for any $P\in \partial\Omega$, there exists a rigid transformation of
coordinates such that $P = 0$ and
\[
\Omega \cap Q_{r_0} = \{(x',x_n)\in Q_{r_0} \mid x_n > \phi(x') \}
\]
where $\phi$ is a Lipschitz continuous function on $B_{r_0}'$ with $\phi(0)=0$ and
\[
\|\phi\|_{C^{0,1}(B_{r_0}')} \le L r_0 .
\]
\end{definition}

\medskip\medskip

\begin{definition}\label{def:Holder-class}
Let $\Omega$ be a bounded domain in $\mathbb{R}^n$. Given $\alpha
\in(0,1)$, we say that $\partial\Omega$ is of $C^{1,\alpha}$ class
with constants $r_0, L>0$ if, for any $P\in \partial \Omega$, there
exists a rigid transformation of coordinates such that $P = 0$ and
\[
\Omega \cap Q_{r_0} = \{(x',x_n)\in Q_{r_0} \mid x_n > \phi(x') \}
\]
where $\phi$ is a $C^{1,\alpha}$ function on $B_{r_0}'$ with $\phi(0)= | \nabla \phi(0)| = 0$ and
\[
\|\phi\|_{C^{1,\alpha}(B_{r_0}')} \le L r_0 .
\]
\end{definition}

\medskip\medskip

\begin{assumption}\label{assumption_domain}
$\Omega \subset \mathbb{R}^n$ is a bounded domain satisfying
\[
|\Omega| \le A |B_{r_0}|.
\]
Here and in the sequel $|\Omega|$ denotes the Lebesgue measure of
$\Omega$. We assume that $\partial \Omega$ is of Lipschitz class with
constants $r_0$ and $L$.
\end{assumption}

\medskip\medskip

\begin{assumption}\label{assumption_conductivity}
The conductivity $\gamma$ is a piecewise constant function of the form
\[
\gamma(x) = \sum_{j = 1}^N \gamma_j \chi_{D_j}(x),
\]
satisfying the ellipticity condition
\[
K^{-1} \le \gamma \le K
\]
for some constant $K$, where $\gamma_j, j=1,\dots N$ are unknown real
numbers and $D_j$ are known open sets in $\mathbb{R}^n$.
\end{assumption}

\medskip\medskip

\begin{assumption}\label{assumption_piece}
The $D_j, j=1,\dots, N$ are connected and pairwise non-overlapping
open sets such that $\cup_{j=1}^N \overline{D}_j = \overline{\Omega}$
and $\partial D_j$ are of $C^{1,\alpha}$ class with constants $r_0$
and $L$ for all $j = 1,\dots , N.$ We also assume that there exists
one region, say $D_1$, such that $\partial D_1 \cap \partial \Omega$
contains an open portion $\Sigma_1$ of $C^{1,\alpha}$ class with
constants $r_0$ and $L$. For every $j \in \{2, \dots, N\}$ there exist
$j_1, \dots , j_M \in \{1,\dots, N\}$ such that
\[
D_{j_1} = D_1, \quad D_{j_M} = D_j
\]
and, for every $k = 1, \dots, M$,
\[
\partial D_{j_{k - 1}} \cap \partial D_{j_k}
\]
contains a open portion $\Sigma_k$ of $C^{1,\alpha}$ class with
constants $r_0$ and $L$.
\end{assumption}

\medskip\medskip

Alessandrini and Vessella \cite{Alessandrini2005} establish the
following Lipschitz stability estimate

\medskip\medskip

\begin{theorem}\label{Lip-stability}
Let $\Omega$ satisfy Assumption~\ref{assumption_domain} and
$\gamma^{(k)}, k = 1, 2$ be two real piecewise constant functions
satisfying Assumption~\ref{assumption_conductivity} and $D_j,
j=1,\dots, N$ satisfying Assumption~\ref{assumption_piece}. Then
there exists a constant $C = C(n, r_0, L, A, ,K, N)$ such that
\begin{equation}\label{Lip_stab}
   \| \gamma^{(1)} - \gamma^{(2)}\|_{L^\infty(\Omega)}
   \le C \|\Lambda_{\gamma^{(1)}}
      - \Lambda_{\gamma^{(2)}}\|_{\mathcal{L}(
   H^{1/2}(\partial\Omega), H^{-1/2}(\partial\Omega))} .
\end{equation}
\end{theorem}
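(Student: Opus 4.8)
We sketch the proof, following Alessandrini and Vessella \cite{Alessandrini2005}. The plan is to reduce (\ref{Lip_stab}) to a quantitative bound on each individual jump $|\gamma^{(1)}_j - \gamma^{(2)}_j|$ and then to propagate this bound through the chain of subdomains provided by Assumption~\ref{assumption_piece}. Write $\epsilon = \|\Lambda_{\gamma^{(1)}} - \Lambda_{\gamma^{(2)}}\|_{\mathcal{L}(H^{1/2}(\partial\Omega),H^{-1/2}(\partial\Omega))}$; by the ellipticity bound $K^{-1}\le\gamma^{(k)}\le K$ we may assume $\epsilon$ is as small as we wish, the estimate being trivial otherwise. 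The entry point is Alessandrini's identity: if $u_k$ solves $\nabla\cdot(\gamma^{(k)}\nabla u_k)=0$ in $\Omega$ with $u_k|_{\partial\Omega}=f_k$, then
\[
\int_\Omega (\gamma^{(1)}-\gamma^{(2)})\,\nabla u_1\cdot\nabla u_2\,\dd x = \langle(\Lambda_{\gamma^{(1)}}-\Lambda_{\gamma^{(2)}})f_1,f_2\rangle ,
\]
so the left-hand side is bounded by $\epsilon\,\|f_1\|_{H^{1/2}(\partial\Omega)}\,\|f_2\|_{H^{1/2}(\partial\Omega)}$. The point is then to choose the data $f_k$ so that this bilinear form detects the jump in one prescribed subdomain while the boundary norms stay under control.

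For the base case I would work near the portion $\Sigma_1\subset\partial D_1\cap\partial\Omega$: extending $\Omega$ a little across $\Sigma_1$ and placing the pole $y$ of a Green's function for the constant-coefficient operator just outside $\overline\Omega$, one obtains a family of singular solutions whose gradients concentrate at a point $z\in\Sigma_1$ as $y\to z$. Inserting this family into the identity and extracting the leading (most singular) contribution should yield $|\gamma^{(1)}_1 - \gamma^{(2)}_1|\le C\epsilon$, with $C$ depending only on $n,r_0,L,A,K$ through the regularity theory for the Dirichlet problem up to a $C^{1,\alpha}$ boundary portion. The inductive step walks along a chain $D_1 = D_{j_1},\dots,D_{j_M}=D_j$: assuming the jumps on $D_{j_1},\dots,D_{j_{k-1}}$ are already controlled by $C\epsilon$, I would transport a smallness estimate for the relevant solution across the shared $C^{1,\alpha}$ interface $\Sigma_k$ into $D_{j_k}$ by quantitative unique continuation (three-spheres inequality, propagation of smallness along chains of balls), combined with a non-degeneracy bound $\int_{D_{j_k}}|\nabla u|^2\,\dd x\ge c\int_\Omega|\nabla u|^2\,\dd x$ for a suitable solution $u$, itself obtained by quantitative unique continuation. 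Feeding these into the identity forces $|\gamma^{(1)}_{j_k}-\gamma^{(2)}_{j_k}|\le C\epsilon$, and since the chains have length at most $N$ the induction terminates; taking the maximum over $j$ gives (\ref{Lip_stab}).

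The hard part will be obtaining genuine \emph{Lipschitz} dependence rather than a Hölder one: iterating the three-spheres inequality in the naive way degrades the exponent at each step and would only give $\|\gamma^{(1)}-\gamma^{(2)}\|_{L^\infty}\le C\epsilon^{\theta}$ with some $\theta<1$. The remedy is to run the propagation of smallness at a fixed length scale determined by the a priori data, so that the number of elementary three-spheres steps is bounded by a constant depending only on $n,r_0,L,A,K,N$; the technical heart is the resulting \emph{Lipschitz propagation of smallness} inequality $\int_D|\nabla u|^2\,\dd x\ge C\int_\Omega|\nabla u|^2\,\dd x$ with a multiplicative, not Hölder, constant. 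A secondary difficulty, which I expect to be routine but laborious, is to track every regularity and unique-continuation constant so that it depends only on the a priori class and not on the particular conductivities — in particular near the $C^{1,\alpha}$ interfaces, where the solutions themselves are only $C^{1,\alpha}$.
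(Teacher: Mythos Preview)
The paper does not prove this theorem at all: it is quoted verbatim as the main result of Alessandrini and Vessella \cite{Alessandrini2005}, introduced by the sentence ``Alessandrini and Vessella \cite{Alessandrini2005} establish the following Lipschitz stability estimate'', and no argument is supplied. So there is no ``paper's own proof'' to compare against; your sketch is already more than the paper offers.

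That said, your outline is a fair summary of the Alessandrini--Vessella strategy: Alessandrini's identity, singular solutions with pole approaching a flat portion of the interface, and an induction along the chain of subdomains from Assumption~\ref{assumption_piece}. Two points are worth sharpening. First, in the inductive step the mechanism is not really a lower bound of the form $\int_{D_{j_k}}|\nabla u|^2\ge c\int_\Omega|\nabla u|^2$ for a generic solution; rather, one places the pole of the singular solution inside the already-controlled region $D_{j_{k-1}}$ and lets it approach $\Sigma_k$, using that $\gamma^{(1)}$ and $\gamma^{(2)}$ are now known to agree (up to $C\epsilon$) on $D_{j_1},\dots,D_{j_{k-1}}$ so that the two singular solutions stay close there. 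Second, the upgrade from H\"older to Lipschitz does not come merely from capping the number of three-spheres steps; the decisive ingredient is the finite-dimensionality of the unknown (only $N$ real parameters), which lets one rescale and exploit that the blow-up rate of the singular solutions at the interface dominates all remainder terms uniformly over the a priori class. Your closing paragraph gestures at this, but the actual argument hinges on the piecewise-constant structure rather than on a generic ``fixed length scale'' choice.
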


\subsection{Convergence}
\label{sec:6-3}

We verify that the assumptions of Section~\ref{sec:4} can be
satisfied. We specify our preimage space as
\begin{equation}\label{preimage-space}
X = \mbox{span}\{ \chi_{D_1},\dots , \chi_{D_N}\}
\end{equation}
equipped with $L^p$-norm, $p>1$. With the aid of this particular
basis of $X$, one can show that $F$ and $DF$ are Lipschitz
continuous. Moreover, assuming that $\gamma_1,\gamma_2$ satisfy
Assumption~\ref{assumption_conductivity} and $\Omega$ satisfies
Assumption~\ref{assumption_domain}, we have the following estimates:
\begin{equation}
\begin{array}{rl}
\|F(\gamma_1) - F(\gamma_2)\|_{\mathcal{L}(H^{1/2}(\Omega), H^{-1/2}(\Omega))} \le & C \|\gamma_1 - \gamma_2\|_{L^p(\Omega)},\\
\|DF\|_{\mathcal{L}(X,\mathcal{L}(H^{1/2}(\Omega), H^{-1/2}(\Omega)))} \le & \hat{L},\\
\|DF(\gamma_1) - DF(\gamma_2)\|_{\mathcal{L}(H^{1/2}(\Omega), H^{-1/2}(\Omega))} \le & L\|\gamma_1 - \gamma_2\|_{L^p(\Omega)},\\
\end{array}
\end{equation}
where $C$, $\hat{L}$ and $L$ depend on $\Omega$, $N$ and ellipticity
constant $K$. Furthermore, since $X$ is finite dimensional, the weak
topology is equivalent to the strong topology. Hence, $F$ is a weakly
sequentially closed operator.

Let $\Omega$ satisfy Assumption~\ref{assumption_domain}, preimage
space $X$ be defined by (\ref{preimage-space}) and $F$ be defined by
(\ref{forward-operator}). Assume that $y = F(\gamma^\dag)$ for some
$\gamma^\dag \in X$. Then
Assumption~\ref{assumption:forward-operator} and
(\ref{assumption-L}) of Theorem~\ref{thm:Banach-converge} are
satisfied. Hence the Landweber iteration
(\ref{Landweber-iter-Banach}) converges with convergence radius
given by (\ref{converge-radius-Banach}) and convergence rate given
by (\ref{convergence-rate-Banach}). Convergence of a regularized
Newton method for a finite dimensional EIT problem was proven by
Lechleitner and Rieder \cite{Lechleiter2008}. Their analysis, however,
is based on the tangential cone condition.

\section{Discussion}
\label{sec:7}

We discuss a Landweber iteration method for solving nonlinear operator
equations in both Hilbert and Banach spaces. Traditionally, the
gradient-type methods are often regarded as too slow for practical
applications. Provided that the nonlinearity of the forward operator
obeys a H\"{o}lder type stability, we could prove the convergence and
give a sublinear convergence rate. With a Lipschitz type stability,
the convergence rate switches to a linear one. Based on these
convergence rates, we anticipate that this Landweber iteration is a
valuable tool in solving inverse problems in both Hilbert and Banach
spaces. This also motivates the study of H\"{o}lder/Lipschitz type
stability in inverse problems to provide explicit reconstructions.

\section*{Acknowledgements}

The research in this paper was initialized at the Program on Inverse Problems and
Applications at MSRI, Berkeley, during the Fall of 2010. This research was supported in part by National Science Foundation grant CMG DMS-1025318, and in part by the members of the Geo-Mathematical Imaging Group at Purdue University. The work of O.S. was supported by the Austrian Science Fund (FWF) within the national research network Photo\-acoustic Imaging in Biology and Medicine, project S10505-N20.

\noindent

\def\cprime{$'$} \newcommand{\SortNoop}[1]{}

\section*{References}
\bibliographystyle{acm}
\bibliography{conv}

\end{document}